\definecolor{codegreen}{rgb}{0,0.6,0}
\definecolor{codegray}{rgb}{0.5,0.5,0.5}
\definecolor{codepurple}{rgb}{0.58,0,0.82}
\definecolor{backcolour}{rgb}{0.95,0.95,0.92}
\lstdefinestyle{mystyle}{
    backgroundcolor=\color{backcolour},   
    commentstyle=\color{codegreen},
    keywordstyle=\color{magenta},
    numberstyle=\tiny\color{codegray},
    stringstyle=\color{codepurple},
    basicstyle=\ttfamily\footnotesize,
    breakatwhitespace=false,         
    breaklines=true,                 
    captionpos=b,                    
    keepspaces=true,                 
    numbers=left,                    
    numbersep=5pt,                  
    showspaces=false,                
    showstringspaces=false,
    showtabs=false,                  
    tabsize=2
}
\theoremstyle{plain}
\newtheorem{thm}{Theorem}[section]
\newtheorem{cor}[thm]{Corollary}
\newtheorem{lem}[thm]{Lemma}
\newtheorem{prop}[thm]{Proposition}
\def\cal{\mathcal}
\def\bbb{\mathbb}
\renewcommand{\phi}{\varphi}
\newcommand{\N}{\bbb{N}}
\newcommand{\Mod}[1]{\ (\mathrm{mod}\ #1)}
\newcommand{\Par}{\textrm{par}\hspace{0.05cm} }
\newcommand{\T}{{\bf T}}
\newcommand{\TA}{{\bf T}_{\mathcal{A}}}
\newcommand{\Ta}[1]{{\bf T}_{\mathcal{A}_{#1}}}
\begin{document}
\title[Connections between certain numbers related to derangements and $r$-permutations]{Connections between certain numbers related to derangements and $r$-permutations}
\author{Piotr Miska}
\address{Institute of Mathematics, Faculty of Mathematics and Computer Science, Jagiellonian University in Krak\'ow, {\L}ojasiewicza 6, 30-348 Krak\'ow, Poland \newline
and Department of Mathematics, J. Selye University, P. O. Box 54, 945 01 Kom\'arno, Slovakia
}
\email{piotr.miska@uj.edu.pl}
\email{miskap@ujs.sk}

\author{Błażej Żmija}
\address{Charles University, Faculty of Mathematics and Physics, Department of Mathematical Analysis, Sokolov\-sk\' a 83, 18600 Praha~8, Czech Republic \newline
and Institute of Mathematics of the Polish Academy of Sciences, \'{S}niadeckich 8, 00-656 Warsaw, Poland}
\email{blazej.zmija@gmail.com}

\keywords{derangements, cycles, counting method, exponential generating functions} 
\subjclass[2020]{11B75, 11B37}
\thanks{The research of the first author was partially supported by the grant of the Polish National Science Centre no. UMO-2019/34/E/ST1/00094. The second author was supported by the Charles University grant PRIMUS/25/SCI/017.}

\begin{abstract}
For non-negative integer parameters $r,u,m,n$ define
\begin{align*} 
    \cal{D}(r,u,m,n) &:= \big\{\ \sigma\in \cal{S}_{r+n}\ \big|\ \sigma(x)=y \textrm{ for exactly } u \textrm{ pairs } (x,y) \textrm{ such that } 1\leq x,y\leq r \textrm{ and}  \\
     & \hspace{2.3cm} \sigma(t)=t \textrm{ for exactly } m \textrm{ elements } r+1\leq t\leq r+n\ \big\}, \\
     \cal{D}_{r,u,m}(n) & := \big\{\ \sigma\in \cal{S}_{r+n}\ \big|\ \forall_{1\leq x<y\leq r} \  x \textrm{ and } y \textrm{ are in disjoint cycles of } \sigma \textrm{ and} \\
     & \hspace{2.3cm}  \sigma(z)=z \textrm{ for exactly } u \textrm{ elements } 1\leq z\leq r, \textrm{ and}  \\
     & \hspace{2.3cm} \sigma(t)=t \textrm{ for exactly } m \textrm{ elements } r+1\leq t\leq r+n\ \big\},
\end{align*}
where $\mathcal{S}_{n}$ denotes the set of all the permutations of $\{1,\ldots ,n\}$.

In this paper we study connections between the sets $\cal{D}(r,u,m,n)$, $\cal{D}_{r,u,m}(n)$, and the sets of (some classes of) $r$-derangements. We rely mostly on counting arguments.

\end{abstract}

\maketitle

\section{Introduction}

Denote by $\N$ and $\N_+$ the sets of all non-negative integers and all positive integers, respectively. For every $n\in\N$ we denote by $\cal{S}_n$ the set of all permutations of the set $\{1,\ldots ,n\}$. For each $n\in\N$, let $\mathcal{D}(n)$ denote the set of derangements of the set $\{1,\ldots ,n\}$, that is, all the permutations in $\cal{S}_{n}$ without fixed points:
\begin{align*}
    \mathcal{D}(n):=\big\{\ \sigma\in\mathcal{S}_{n}\ \big|\ \sigma(i)\neq i \textrm{ for all } i\in\{1,\ldots ,n\}\ \big\}.
\end{align*}
Let $d(n):=\#\mathcal{D}(n)$. The sequence $(d(n))_{n\in\N}$, called the sequence of derangements, appears in the context of various number sequences, for example in the paper of Sun and Zagier \cite{SZ}, devoted to Bell numbers. Arithmetic properties of numbers of derangements were studied by Miska in \cite{M}. In the same article arithmetic properties of numbers of even and odd derangements were investigated.

The numbers $d(n)$ were generalized by Wang, Miska, and Mez\H{o} in \cite{WMM}. They introduced the notion of the number of $r$-derangements $d_r(n)$. The number $d_r(n)$ is defined as the number of $r$-derangements of a set $\{1,\ldots ,r+n\}$, i.e., derangements $\rho\in\cal{S}_{r+n}$ such that any two numbers from the set $\{1,\ldots ,r\}$ lie in disjoint cycles of $\rho$ written as a product of pairwise disjoint cycles. In particular, $d_0(n)=d(n)$ and $d_1(n)=d(n+1)$. It is worth noting that not just the numbers of derangements were generalized in that manner. For other examples of $r$-generalizations of some combinatorial objects, see \cite{B, NyR, Sh}.

Capparelli, Ferrari,  Munarini, and Zagaglia Salvi in \cite{CFMZS} studied the generalized derangement numbers $d_n^{(r)}$ defined as numbers of fixed-point-free bijections between two sets with $n+r$ elements that have exactly $n$ elements in common. More generally, they were interested in the generalized recontres numbers $d_{n,m}^{(r)}$ that count bijections between two $(n+r)$-element sets with $n$-element intersection which have exactly $m$ fixed points. Their work was motivated by the article of Beggas, Ferrari, and Zagaglia Salvi \cite{BFZS}, who investigated the numbers of widened permutations, i.e., the numbers of bijections between two sets of cardinality $n+1$ that have exactly $n$-elements in common.

Another generalization of derangements was proposed recently in a paper by Moshtagh and Fallah-Moghaddam \cite{MFM}. They considered the numbers $d(r,n)$ of so-called block derangements, that is, fixed-point-free permutations $\sigma\in\cal{S}_{r+n}$ such that $\sigma(i)\not\in\{1,\ldots ,r\}$ for any $i\in\{1,\ldots ,r\}$. Their main result is that $d(r,n)=r!d_r(n)$ for every $r,n\in\N$.

Motivated by the aforementioned results, we introduce and study two new 
classes of sequences of derangements, where the first one generalizes block derangement numbers $d(r,n)$ and the other one extends the notion of $r$-derangement numbers $d_r(n)$. Moreover, we shall see that generalized recontres numbers $d_{n,m}^{(r)}$ can be represented as sums of certain numbers from the first class (see Proposition \ref{P1}). Our goal is to prove a strict connection between those two classes with the use of a combinatorial argument. We also study some further variations of these sequences by fixing the number of disjoint cycles in the counted permutations, their parity, or both.

We introduce the main definitions in the next section. In Section \ref{SecMainLemma} we define the main tool used in our proofs: the class of cycle-splitting functions $\TA$. We also prove the crucial Lemma \ref{LemMain}. The main result, Theorem \ref{main}, is presented in Section \ref{SecMainResults}. In this part, we rely on counting arguments. A different approach, based on (exponential) generating functions can be found in Section \ref{SecPowerSeries}. In Section \ref{SecDerang} we find strict connections between our new sequences and the sequences of $r$-derangements $(d_{r}(n))_{n\in\mathbb{N}}$. The last section is devoted to study of $r$-derangements with fixed parity. In particular, we present explicit formulae in Theorem \ref{ThmExForDri}.

\section{Notation}

For a permutation $\sigma$ define its parity $\Par \sigma$ as follows:
\begin{align*}
    \Par\sigma :=\left\{\begin{array}{ll}
    0, & \hspace{0.2cm} \textrm{ if } \sigma \textrm{ is even}, \\
    1, & \hspace{0.2cm} \textrm{ if } \sigma \textrm{ is odd}.
    \end{array}\right.
\end{align*}

Let $\cal{D}(r,u,m,n)$ be the set of permutations $\sigma$ of the set $\{1,\ldots,r+n\}$ such that $\#(\sigma(\{1,\ldots,r\})\cap\{1,\ldots,r\})=u$ and exactly $m$ elements from the set $\{r+1,\ldots,r+n\}$ are fixed points of $\sigma$. In other words,
\begin{align*}
    \cal{D}(r,u,m,n) &:= \big\{\ \sigma\in \cal{S}_{r+n}\ \big|\ \sigma(x)=y \textrm{ for exactly } u \textrm{ pairs } (x,y) \textrm{ such that } 1\leq x,y\leq r \textrm{ and}  \\
     & \hspace{2.65cm} \sigma(t)=t \textrm{ for exactly } m \textrm{ elements } r+1\leq t\leq r+n\ \big\}. 
\end{align*}
Put $d(r,u,m,n)=\#\cal{D}(r,u,m,n)$. Moreover, we define
\begin{itemize}
    \item $\cal{D}_k(r,u,m,n)$ as the subset of $\cal{D}(r,u,m,n)$ containing precisely the permutations such that $1,\ldots ,r$ lie in exactly $k$ disjoint cycles,
    \item $\cal{D}^{(i)}(r,u,m,n)$ as the subset of $\cal{D}(r,u,m,n)$ containing precisely the permutations of parity equal to $i$,
    \item $\cal{D}_k^{(i)}(r,u,m,n)$ as the subset of $\cal{D}(r,u,m,n)$ containing precisely the permutations of parity equal to $i$ and such that $1,\ldots ,r$ lie in exactly $k$ disjoint cycles.
\end{itemize}
Moreover, the numbers $d_k(r,u,m,n)$, $d^{(i)}(r,u,m,n)$, $d_k^{(i)}(r,u,m,n)$ are the cardinalities of the sets $\cal{D}_k(r,u,m,n)$, $\cal{D}^{(i)}(r,u,m,n)$, and $\cal{D}_k^{(i)}(r,u,m,n)$, respectively.

It follows from the very definition that $d(r,0,0,n)=d(r,n)$ is the number of block derangements. The following identity is a bit less obvious but still easy to prove.

\begin{prop}\label{P1}
    For each $n,m,r\in\N$ we have
    \begin{align*}
        d_{n,m}^{(r)}=\sum_{u=0}^r d(r,u,m,n).
    \end{align*}
\end{prop}

\begin{proof}
    Note that $\sum_{u=0}^r d(r,u,m,n)$ is the number of all the permutations of a set with $n+r$ elements that have exactly $m$ fixed points among $r+1,\ldots r+n$. This is the same as the number of bijections between two $(n+r)$-element sets with $n$-element intersection that have exactly $m$ fixed points. The latter number is $d_{n,m}^{(r)}$ from the definition.
\end{proof}

The above proposition shows that the numbers $d(r,u,m,n)$ carry more detailed information than $d_{n,m}^{(r)}$. Thus, they may be treated as a generalization or refinement of generalized recontres numbers. 

Let $\cal{D}_{r,u,m}(n)$ be the set of permutations $\sigma\in \cal{S}_{r+n}$ such that every two distinct elements of the set $\{1,\ldots,r\}$ are in two disjoint cycles of $\sigma$ and exactly $u$ elements from the set $\{1,\ldots,r\}$ and $m$ elements from the set $\{r+1,\ldots,r+n\}$ are fixed points of $\sigma$. That is,
\begin{align*}
    \cal{D}_{r,u,m}(n) & := \big\{\ \sigma\in \cal{S}_{r+n}\ \big|\ \forall_{1\leq x<y\leq r} \  x \textrm{ and } y \textrm{ are in disjoint cycles of } \sigma, \textrm{ and} \\
     & \hspace{2.65cm}  \sigma(z)=z \textrm{ for exactly } u \textrm{ elements } 1\leq z\leq r, \textrm{ and}  \\
     & \hspace{2.65cm} \sigma(t)=t \textrm{ for exactly } m \textrm{ elements } r+1\leq t\leq r+n\ \big\}.
\end{align*}

By $\cal{D}_{r,u,m}^{(i)}(n)$ we mean the subset of $\cal{D}_{r,u,m}(n)$ containing permutations of parity equal to $i$. Put $d_{r,u,m}(n)=\#\cal{D}_{r,u,m}(n)$ and $d_{r,u,m}^{(i)}(n)=\#\cal{D}_{r,u,m}^{(i)}(n)$. If $m=u=0$, then we recover $r$-derangements and thus write $\cal{D}_r(n)$, $d_r(n)$, $\cal{D}_r^{(i)}(n)$ and $d_r^{(i)}(n)$ instead of $\cal{D}_{r,0,0}(n)$, $d_{r,0,0}(n)$, $\cal{D}_{r,0,0}^{(i)}(n)$ and $d_{r,0,0}^{(i)}(n)$, respectively. If additionally $r=0$, then we will write $\cal{D}(n)$, $d(n)$, $\cal{D}^{(i)}(n)$ and $d^{(i)}(n)$, for short.

In the sequel, we will use unsigned Stirling numbers of the first kind, denoted by $\left[\begin{array}{c}r\\k\end{array}\right]$ for natural numbers $r$ and $k$. They are defined as the coefficients of rising factorials:
\begin{align*}
    (x)^{(r)}:=\prod_{j=0}^{r-1}(x+j)=\sum_{k=0}^{r}\left[\begin{array}{c}r\\k\end{array}\right]x^{k}.
\end{align*}
Equivalently, $\left[\begin{array}{c}r\\k\end{array}\right]$ is equal to the number of permutations of the set $\{1,\ldots ,r\}$ with exactly $k$ disjoint cycles. In particular,
\begin{align}\label{eqStirling}
    \sum_{k=0}^{r}\left[\begin{array}{c}r\\k\end{array}\right]=r!,
\end{align}
because this is the number of all permutations of $\{1,\ldots ,r\}$.

For an illustration, let us list all the elements of the sets $\cal{D}(2,1,1,3)$ and $\cal{D}_{2,1,1}(3)$, respectively. All the permutations below are written in the form of products of pairwise disjoint cycles.
\begin{align*}
    \mathcal{D}(2,1,1,3) & = \left\{\begin{array}{cccccc}
        (234), & (235), & (243), & (245), & (253), & (254), \\
        (134), & (135), & (143), & (145), & (153), & (154), \\
        (1234), & (1235), & (1243), & (1245), & (1253), & (1254), \\
        (2134), & (2135), & (2143), & (2145), & (2153), & (2154)
    \end{array}\right\}, \\
    \mathcal{D}(2,1,1,3) & = \left\{\begin{array}{cccccc}
        (234), & (235), & (243), & (245), & (253), & (254), \\
        (134), & (135), & (143), & (145), & (153), & (154)
    \end{array}\right\}.
\end{align*}

We can see above that the set $\cal{D}(2,1,1,3)$ contains twice as many elements as $\cal{D}_{2,1,1}(3)$ (that is, $24$ elements vs $12$ elements). As it turns out, in general the cardinality of $\cal{D}(r,u,m,n)$ is $r!$ times greater than the cardinality of $\cal{D}_{r,u,m}(n)$ (see Theorem \ref{main}\eqref{EqMain2}).

We have used PARI/GP code (written for us by Pavlo Yatsyna) to calculate the numbers $d(r,u,m,n)$ and $d_{r,u,m}(n)$ for small cases. Specifically, we chose $r\in\{2,3\}$, $u,m\in\{0,1,2\}$ and $n\in\{0,1,2,3,4,5,6,7\}$. In fact, we managed to do that in two ways: first, by computing the sets $\mathcal{D}(r,u,m,n)$ and $\mathcal{D}_{r,u,m}(n)$ directly; next, by using the explicit formulae
\begin{align*}
    d_{r,u,m}(n)=\binom{r}{u}\binom{n}{m}\sum_{j=r-u}^{n-m}\binom{j}{r-u}\frac{(n-m)!}{(n-m-j)!}(-1)^{n-m-j}
\end{align*}
and $d(r,u,m,n)=r!d_{r,u,m}(n)$, which are shown in our Theorem \ref{main}\eqref{EqMain2} and Corollary \ref{exactd_rum(n)}. Therefore, we have verified the result numerically in the presented range.

The code can be found in Appendix A.

\begin{center}
    \begin{longtable}{|c|c|c||c|c|c|}
        \hline
         $(r,u,m,n)$ & $d(r,u,m,n)$ & $d_{r,u,m}(n)$ & $(r,u,m,n)$ & $d(r,u,m,n)$ & $d_{r,u,m}(n)$ \\ 
         \hline
         $(2,0,0,0)$ & $0$ & $0$ & $(3,0,0,0)$ & $0$ & $0$ \\
         \hline
         $(2,0,0,1)$ & $0$ & $0$ & $(3,0,0,1)$ & $0$ & $0$ \\
         \hline
         $(2,0,0,2)$ & $4$ & $2$ & $(3,0,0,2)$ & $0$ & $0$ \\
         \hline
         $(2,0,0,3)$ & $24$ & $12$ & $(3,0,0,3)$ & $36$ & $6$ \\
         \hline
         $(2,0,0,4)$ & $168$ & $84$ & $(3,0,0,4)$ & $432$ & $72$ \\
         \hline
         $(2,0,0,5)$ & $1280$ & $640$ & $(3,0,0,5)$ & $4680$ & $780$ \\
         \hline
         $(2,0,0,6)$ & $10860$ & $5430$ & $(3,0,0,6)$ & $51120$ & $8520$ \\
         \hline
         $(2,0,0,7)$ & $101976$ & $50988$ & $(3,0,0,7)$ & $585900$ & $97650$ \\
         \hline
         $(2,0,1,0)$ & $0$ & $0$ & $(3,0,1,0)$ & $0$ & $0$ \\
         \hline
         $(2,0,1,1)$ & $0$ & $0$ & $(3,0,1,1)$ & $0$ & $0$ \\
         \hline
         $(2,0,1,2)$ & $0$ & $0$ & $(3,0,1,2)$ & $0$ & $0$ \\
         \hline
         $(2,0,1,3)$ & $12$ & $6$ & $(3,0,1,3)$ & $0$ & $0$ \\
         \hline
         $(2,0,1,4)$ & $96$ & $48$ & $(3,0,1,4)$ & $144$ & $24$ \\
         \hline
         $(2,0,1,5)$ & $840$ & $420$ & $(3,0,1,5)$ & $2160$ & $360$ \\
         \hline
         $(2,0,1,6)$ & $7680$ & $3840$ & $(3,0,1,6)$ & $28080$ & $4680$ \\
         \hline
         $(2,0,1,7)$ & $76020$ & $28010$ & $(3,0,1,7)$ & $357840$ & $59640$ \\
         \hline
         $(2,0,2,0)$ & $0$ & $0$ & $(3,0,2,0)$ & $0$ & $0$ \\
         \hline
         $(2,0,2,1)$ & $0$ & $0$ & $(3,0,2,1)$ & $0$ & $0$ \\
         \hline
         $(2,0,2,2)$ & $0$ & $0$ & $(3,0,2,2)$ & $0$ & $0$ \\
         \hline
         $(2,0,2,3)$ & $0$ & $0$ & $(3,0,2,3)$ & $0$ & $0$ \\
         \hline
         $(2,0,2,4)$ & $24$ & $12$ & $(3,0,2,4)$ & $0$ & $0$ \\
         \hline
         $(2,0,2,5)$ & $240$ & $120$ & $(3,0,2,5)$ & $360$ & $60$ \\
         \hline
         $(2,0,2,6)$ & $2520$ & $1260$ & $(3,0,2,6)$ & $6480$ & $1080$ \\
         \hline
         $(2,0,2,7)$ & $26880$ & $13440$ & $(3,0,2,7)$ & $98280$ & $16380$ \\
         \hline
         $(2,1,0,0)$ & $0$ & $0$ & $(3,1,0,0)$ & $0$ & $0$ \\
         \hline
         $(2,1,0,1)$ & $4$ & $2$ & $(3,1,0,1)$ & $0$ & $0$ \\
         \hline
         $(2,1,0,2)$ & $8$ & $4$ & $(3,1,0,2)$ & $36$ & $6$ \\
         \hline
         $(2,1,0,3)$ & $36$ & $18$ & $(3,1,0,3)$ & $216$ & $36$ \\
         \hline
         $(2,1,0,4)$ & $176$ & $88$ & $(3,1,0,4)$ & $1512$ & $252$ \\
         \hline
         $(2,1,0,5)$ & $1060$ & $530$ & $(3,1,0,5)$ & $11520$ & $1920$ \\
         \hline
         $(2,1,0,6)$ & $7416$ & $3708$ & $(3,1,0,6)$ & $97740$ & $16290$ \\
         \hline
         $(2,1,0,7)$ & $59332$ & $29666$ & $(3,1,0,7)$ & $917784$ & $152964$ \\
         \hline
         $(2,1,1,0)$ & $0$ & $0$ & $(3,1,1,0)$ & $0$ & $0$ \\
         \hline
         $(2,1,1,1)$ & $0$ & $0$ & $(3,1,1,1)$ & $0$ & $0$ \\
         \hline
         $(2,1,1,2)$ & $8$ & $4$ & $(3,1,1,2)$ & $0$ & $0$ \\
         \hline
         $(2,1,1,3)$ & $24$ & $12$ & $(3,1,1,3)$ & $108$ & $18$ \\
         \hline
         $(2,1,1,4)$ & $144$ & $72$ & $(3,1,1,4)$ & $864$ & $144$ \\
         \hline
         $(2,1,1,5)$ & $880$ & $440$ & $(3,1,1,5)$ & $7560$ & $1260$ \\
         \hline
         $(2,1,1,6)$ & $6360$ & $3180$ & $(3,1,1,6)$ & $69120$ & $11520$ \\
         \hline
         $(2,1,1,7)$ & $51912$ & $25956$ & $(3,1,1,7)$ & $684180$ & $114030$ \\
         \hline
         $(2,1,2,0)$ & $0$ & $0$ & $(3,1,2,0)$ & $0$ & $0$ \\
         \hline
         $(2,1,2,1)$ & $0$ & $0$ & $(3,1,2,1)$ & $0$ & $0$ \\
         \hline
         $(2,1,2,2)$ & $0$ & $0$ & $(3,1,2,2)$ & $0$ & $0$ \\
         \hline
         $(2,1,2,3)$ & $12$ & $6$ & $(3,1,2,3)$ & $0$ & $0$ \\
         \hline
         $(2,1,2,4)$ & $48$ & $24$ & $(3,1,2,4)$ & $216$ & $36$ \\
         \hline
         $(2,1,2,5)$ & $360$ & $180$ & $(3,1,2,5)$ & $2160$ & $360$ \\
         \hline
         $(2,1,2,6)$ & $2640$ & $1320$ & $(3,1,2,6)$ & $22680$ & $3780$ \\
         \hline
         $(2,1,2,7)$ & $22260$ & $11130$ & $(3,1,2,7)$ & $241920$ & $40320$ \\
         \hline
    \end{longtable}
\end{center}

\section{Main Lemma}\label{SecMainLemma}

Let us fix numbers $r,u,m,n\in\mathbb{N}$ and a subset $\cal{A}\subseteq \cal{D}(r,u,m,n)$. Let us consider a function $\TA:\cal{A}\to \cal{D}_{r,u,m}(n)$ constructed in the following way. If $\sigma\in\cal{A}$, then we write $\sigma$ as a product of pairwise disjoint cycles and begin each cycle from its least element. Next, every cycle in $\sigma$ having more than one element from the set $\{1,\ldots,r\}$ splits into several cycles as follows. Denote all the elements of the set $\{1,\ldots,r\}$ contained in the cycle by $a_1,\ldots,a_s$ and denote the cycle in the form
$$
(a_1 b_{1,1} \ldots b_{1,t_1} a_2 b_{2,1} \ldots b_{2,t_2} \ldots a_s b_{s,1} \ldots b_{s,t_s}),
$$ 
where $s>1$ and $t_1,\ldots,t_s\in\N$. Next, we split the cycle: 
$$
(a_1 b_{1,1} \ldots b_{1,t_1} a_2 b_{2,1} \ldots b_{2,t_2} \ldots a_s b_{s,1} \ldots b_{s,t_s})\mapsto (a_1 b_{1,1} \ldots b_{1,t_1})(a_2 b_{2,1} \ldots b_{2,t_2})\cdots(a_s b_{s,1} \ldots b_{s,t_s}).
$$ 
Finally, our permutation $\TA(\sigma)$ is the permutation obtained from $\sigma$ by splitting all its cycles containing more than one element from the set $\{1,\ldots,r\}$ as above.

The following fact will be crucial for our argument.

\begin{lem}\label{LemMain}
Let $\rho\in\cal{D}_{r,u,m}(n)$. Let $\cal{A}\subseteq \cal{D}(r,u,m,n)$ be such that for every $\sigma\in \cal{A}$ the number of disjoint cycles of $\sigma$ containing numbers from $\{1,\ldots ,r\}$ satisfies a fixed condition $w$ (i.e., the condition $w$ may mean for example the conditions like `having some fixed value' or `laying in some prescribed residue class modulo some positive integer'; $w$ may depend on $\rho$). Then
\begin{align*}
    \# \TA^{-1}(\rho )= r!\sum_{\substack{j_1,\ldots,j_r\in\N \\j_1+2j_2+\cdots+rj_r=r \\ j_{1}+\cdots +j_{r}\ {\rm satisfies}\ w}}\frac{1}{\prod_{i=1}^r i^{j_i}j_i!}.
\end{align*}
\end{lem}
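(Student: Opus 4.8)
The plan is to understand the fibre $\TA^{-1}(\rho)$ completely by reversing the cycle-splitting procedure. Fix $\rho\in\cal{D}_{r,u,m}(n)$, written as a product of disjoint cycles. Because every two elements of $\{1,\dots,r\}$ lie in disjoint cycles of $\rho$, the $r$ elements $1,\dots,r$ are distributed one per cycle among exactly $r$ of the cycles of $\rho$ (the remaining cycles, all living inside $\{r+1,\dots,r+n\}$, play no role and must be left untouched by any preimage). A permutation $\sigma\in\cal{A}$ with $\TA(\sigma)=\rho$ is obtained by choosing how to \emph{merge} these $r$ marked cycles of $\rho$ into larger cycles, each merged cycle containing some subset of the marked elements; the splitting map then recovers $\rho$ precisely because splitting is done at the marked elements, reading each cycle from its least element. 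So I would first argue the bijection: preimages of $\rho$ correspond bijectively to ways of (i) partitioning the $r$ marked cycles into blocks and (ii) for each block of size $s$, choosing a cyclic arrangement of its $s$ marked cycles — but with the subtlety that the least marked element in the merged cycle must be the one that "starts" it, which is automatically handled, so the count of cyclic arrangements of $s$ labelled objects is $(s-1)!$.

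Next I would set up the bookkeeping. Let $j_i$ denote the number of blocks of size $i$ in the partition of the $r$ marked cycles, so $j_1+2j_2+\cdots+rj_r=r$ and the total number of blocks is $j_1+\cdots+j_r$; this total equals the number of disjoint cycles of $\sigma$ containing numbers from $\{1,\dots,r\}$, which is exactly the quantity constrained by condition $w$. Hence the sum ranges over precisely those $(j_1,\dots,j_r)$ with $j_1+\cdots+j_r$ satisfying $w$. The number of ways to partition an $r$-element set into $j_i$ blocks of size $i$ for each $i$ is the standard multinomial
\[
\frac{r!}{\prod_{i=1}^r (i!)^{j_i} j_i!},
\]
and then each block of size $i$ can be cyclically arranged in $(i-1)!$ ways, contributing a factor $\prod_{i=1}^r ((i-1)!)^{j_i}$. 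Multiplying and simplifying $\dfrac{(i-1)!}{i!}=\dfrac1i$ gives
\[
\frac{r!}{\prod_{i=1}^r (i!)^{j_i} j_i!}\cdot \prod_{i=1}^r ((i-1)!)^{j_i} = r!\prod_{i=1}^r \frac{1}{i^{j_i} j_i!},
\]
and summing over the admissible tuples yields the claimed formula.

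I should double-check two points that I expect to be the main obstacle, namely that the merging-then-splitting correspondence is genuinely a bijection and that no double counting or loss occurs at the "least element" normalization. The delicate direction is: given a block of $s$ marked cycles of $\rho$ and a cyclic order on them, splicing them into one cycle of $\sigma$ must be done so that applying $\TA$ returns exactly those $s$ cycles in their original form. Because $\TA$ splits a merged cycle at each marked element and each resulting piece is re-normalized to start from its least element — which is precisely its unique marked element if that marked element is smallest, and here it is since within $\{1,\dots,r\}$ versus $\{r+1,\dots,r+n\}$ the marked element need not be the least of its original $\rho$-cycle. This is the subtle part: I would need to check that the splicing can always be performed (and in exactly $(s-1)!$ ways) consistently with the "begin each cycle from its least element" convention used in defining $\TA$, so that the inverse is well defined. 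Once that verification is in hand, the counting is routine and the formula follows; the fixed points (the $u$ marked fixed points and the $m$ fixed points in $\{r+1,\dots,r+n\}$) are preserved throughout and impose no extra constraint beyond fixing membership in $\cal{D}_{r,u,m}(n)$ and $\cal{D}(r,u,m,n)$.
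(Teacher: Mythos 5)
Your proposal is correct and follows essentially the same route as the paper: it inverts $\TA$ by gluing the $r$ marked cycles of $\rho$, counts partitions of those cycles into blocks via the multinomial $r!/\prod_i (i!)^{j_i}j_i!$, multiplies by $((i-1)!)^{j_i}$ orderings per block (anchored at the cycle with the least marked element), and simplifies to the stated sum. The "least element" normalization you flag is exactly the point the paper also addresses, and your observation that each marked element is the minimum of its piece (all other entries lying in $\{r+1,\ldots,r+n\}$) settles it.
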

\begin{proof}
We can create $\sigma\in\cal{D}(r,u,m,n)$ such that $\TA(\sigma)=\rho$ by gluing some cycles of $\rho$ containing elements from the set $\{1,\ldots,r\}$ according to the recipe: 
$$
(a_1 b_{1,1} \ldots b_{1,t_1})(a_2 b_{2,1} \ldots b_{2,t_2})\cdots(a_s b_{s,1} \ldots b_{s,t_s})\mapsto (a_1 b_{1,1} \ldots b_{1,t_1} a_2 b_{2,1} \ldots b_{2,t_2} \ldots a_s b_{s,1} \ldots b_{s,t_s}).
$$
Let us compute in how many ways we can create $\sigma$ from $\rho$ as above.

We have $r$ pairwise disjoint cycles in $\rho$ containing elements of the set $\{1,\ldots,r\}$, as $\rho\in\cal{D}_{r,u,m}(n)$. Assume that we intend to partition these cycles into $j_1$ subsets of cardinality $1$, $j_2$ subsets of cardinality $2$, $j_3$ subsets of cardinality $3, \ldots, j_r$ subsets of cardinality $r$. We can do it in $\frac{r!}{(1!)^{j_1}j_1!\cdot (2!)^{j_2}j_2!\cdot (3!)^{j_3}j_3!\cdots (r!)^{j_r}j_r!}$ ways. In each subset of $i$ cycles we take as the first the cycle with the least element from the set $\{1,\ldots,r\}$ and order the remaining cycles in $(i-1)!$ ways. Next, we glue the cycles in this order. Let us notice that fixing the cycle with the least element as the first one in the ordering makes it so each cycle of $\sigma$ can be attained by gluing from exactly one ordering. Notice that the assumption that $\sigma\in \cal{A}$ implies that $j_{1}+\cdots +j_{r}$ has to satisfy $w$. Summing up, the number of ways of creating $\sigma$ from $\rho$ is equal to 
$$
\sum_{\substack{j_1,\ldots,j_r\in\N \\j_1+2j_2+\cdots+rj_r=r \\ j_{1}+\cdots +j_{r} \textrm{ satisfies } w}}\frac{r!}{\prod_{i=1}^r (i!)^{j_i}j_i!}\cdot\prod_{i=1}^r ((i-1)!)^{j_i}=r!\sum_{\substack{j_1,\ldots,j_r\in\N \\j_1+2j_2+\cdots+rj_r=r \\ j_{1}+\cdots +j_{r} \textrm{ satisfies } w}}\frac{1}{\prod_{i=1}^r i^{j_i}j_i!}.
$$
\end{proof}

\section{Main result}\label{SecMainResults}

Our main result is the following.

\begin{thm}\label{main}
Let us fix $r,u,m,n,k\in\N$ and $i\in\{0,1\}$. Then, we have
\begin{align}\label{geneq}
d_{k}(r,u,m,n)=\left[\begin{array}{c}r\\k\end{array}\right]d_{r,u,m}(n).
\end{align}

In particular,
\begin{align}\label{EqMain2}
d(r,u,m,n)=r!d_{r,u,m}(n).
\end{align}

Moreover,
\begin{align}\label{EqMain3}
d^{(i)}(r,u,m,n)=
\begin{cases}
\frac{r!}{2}d_{r,u,m}(n), &\mbox{ if } r\geq 2,\\    
d_{r,u,m}^{(i)}(n), &\mbox{ if } r\in\{0,1\},
\end{cases}
\end{align}
and
\begin{align}\label{EqMain4}
d_{k}^{(i)}(r,u,m,n)=\left[\begin{array}{c}r\\k\end{array}\right]d^{((r+k+i)\bmod{2})}_{r,u,m}(n).
\end{align}
\end{thm}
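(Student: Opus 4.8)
The plan is to derive all four identities from Lemma \ref{LemMain} by carefully choosing, for each statement, the subset $\cal{A}\subseteq\cal{D}(r,u,m,n)$ to which the cycle-splitting map $\TA$ is applied. The key structural observation is that $\TA$ is well-defined and surjective onto $\cal{D}_{r,u,m}(n)$: splitting a cycle at the occurrences of elements of $\{1,\ldots,r\}$ does not change which elements in $\{r+1,\ldots,r+n\}$ are fixed points, nor the number $u$ of fixed points among $\{1,\ldots,r\}$ (a singleton cycle $(a)$ with $a\le r$ stays a singleton, and conversely the only way to obtain a fixed point $a\le r$ after splitting is from a cycle that already fixed $a$), and it places the $r$ elements of $\{1,\ldots,r\}$ into $r$ pairwise disjoint cycles as required. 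So every fibre $\TA^{-1}(\rho)$ is nonempty, and $\cal{D}(r,u,m,n)$ is partitioned into these fibres.

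First I would prove \eqref{geneq}. Take $\cal{A}=\cal{D}_k(r,u,m,n)$, i.e.\ the condition $w$ on the number of cycles of $\sigma$ meeting $\{1,\ldots,r\}$ is ``equals $k$''. Since $\TA$ restricted to $\cal{A}$ still surjects onto $\cal{D}_{r,u,m}(n)$ (given $\rho$, glue all $r$ of its relevant cycles into a single cycle, which lies in $\cal{D}_1\subseteq\cal{D}_k$ only when $k=1$ — so one must instead check surjectivity more carefully; in fact for arbitrary $k$ one glues the $r$ cycles into exactly $k$ groups, which is possible iff $1\le k\le r$, and for $k=0$ only when $r=0$), Lemma \ref{LemMain} gives
\begin{align*}
\#\TA^{-1}(\rho)=r!\sum_{\substack{j_1,\ldots,j_r\in\N\\ j_1+2j_2+\cdots+rj_r=r\\ j_1+\cdots+j_r=k}}\frac{1}{\prod_{i=1}^r i^{j_i}j_i!},
\end{align*}
and the right-hand side is a standard expression for $\tfrac{1}{r!}\left[\begin{array}{c}r\\k\end{array}\right]\cdot r!=\left[\begin{array}{c}r\\k\end{array}\right]$ — this is exactly the formula expressing the unsigned Stirling number of the first kind as the number of permutations of $\{1,\ldots,r\}$ with $k$ cycles, grouped by cycle type $(j_1,\ldots,j_r)$. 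Summing $\#\TA^{-1}(\rho)$ over $\rho\in\cal{D}_{r,u,m}(n)$ yields $D_k(r,u,m,n)=\left[\begin{array}{c}r\\k\end{array}\right]D_{r,u,m}(n)$. Then \eqref{EqMain2} follows by summing over $k$ and using \eqref{eqStirling}.

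For the parity statements \eqref{EqMain3} and \eqref{EqMain4} the extra ingredient is to track how $\TA$ and the gluing operation change parity. Gluing $i$ cycles into one cycle merges cycles whose lengths sum to some $\ell$ into a single $\ell$-cycle, changing the number of cycles by $i-1$ and hence the parity by $i-1 \bmod 2$; more precisely, if $\rho$ has $k$ cycles meeting $\{1,\ldots,r\}$ and $\sigma\in\TA^{-1}(\rho)$ has these regrouped into $k'$ cycles, then $\Par\sigma=\Par\rho+(k-k')\bmod 2$, since the part of the permutation on $\{r+1,\ldots,r+n\}$-only cycles is untouched. Thus within a fibre $\TA^{-1}(\rho)$, the parity of $\sigma$ is determined by the parity of $k'$ (the number of cycles of $\sigma$ meeting $\{1,\ldots,r\}$) together with $\Par\rho$ and the fixed $k$ coming from $\rho$; in particular for $\rho\in\cal{D}_{r,u,m}(n)$ we have $k=r$ (each of the $r$ elements in its own cycle), so $\Par\sigma\equiv\Par\rho+r+k'\pmod 2$. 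Hence restricting to $\sigma$ with $k'=k$ cycles and parity $i$ forces $\Par\rho\equiv r+k+i\pmod 2$, i.e.\ $\rho\in\cal{D}_{r,u,m}^{((r+k+i)\bmod 2)}(n)$, and by the same Stirling-grouping computation (now the $j$'s are unconstrained except $\sum j_i=k$, and all glued $\sigma$'s built from such a fixed $\rho$ automatically have the right parity) we get $D_k^{(i)}(r,u,m,n)=\left[\begin{array}{c}r\\k\end{array}\right]D_{r,u,m}^{((r+k+i)\bmod 2)}(n)$, which is \eqref{EqMain4}. Finally \eqref{EqMain3} follows by summing \eqref{EqMain4} over $k$: for $r\ge 2$ the Stirling numbers $\left[\begin{array}{c}r\\k\end{array}\right]$ hit both parities of $r+k$ with the right weights so that the even/odd split of $\sum_k\left[\begin{array}{c}r\\k\end{array}\right]=r!$ is exactly $r!/2$ each (using $\sum_k (-1)^k\left[\begin{array}{c}r\\k\end{array}\right]=0$ for $r\ge 2$, i.e.\ $(x)^{(r)}$ evaluated at $x=-1$), while $D_{r,u,m}(n)=D_{r,u,m}^{(0)}(n)+D_{r,u,m}^{(1)}(n)$; for $r\in\{0,1\}$ there is a unique $k$ and $\TA$ is a bijection preserving parity up to the shift by $r+k\equiv 0$, giving $D^{(i)}=D_{r,u,m}^{(i)}$.

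The main obstacle I anticipate is the bookkeeping of parity under gluing: one must verify precisely that the splitting map changes the cycle count (and nothing else relevant) in a controlled way, that the ``first cycle = least-element cycle'' convention makes the gluing-count argument of Lemma \ref{LemMain} still valid fibrewise when we further stratify by the parity/cycle-count of $\sigma$, and that the vanishing alternating sum $\sum_{k}(-1)^{k}\left[\begin{array}{c}r\\k\end{array}\right]=0$ for $r\ge 2$ is invoked correctly to collapse \eqref{EqMain4} into the clean $r!/2$ of \eqref{EqMain3}. Everything else is a direct bookkeeping of the fibre sizes supplied by Lemma \ref{LemMain}.
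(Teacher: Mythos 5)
Your proposal is correct and follows the same skeleton as the paper's proof: apply Lemma \ref{LemMain} to suitably chosen subsets $\mathcal{A}$, observe that the fibre sizes are constant (or constant on parity classes of $\rho$), and sum over $\rho\in\cal{D}_{r,u,m}(n)$ using the same parity relation $\Par\sigma\equiv\Par\rho+r+k'_{\sigma}\pmod 2$. Two of your steps take a genuinely different (and arguably cleaner) route. First, you evaluate the constant $f(r,k)=r!\sum_{j_1+2j_2+\cdots+rj_r=r,\ \sum j_i=k}\prod_i(i^{j_i}j_i!)^{-1}$ by citing the standard cycle-type formula for $\left[\begin{array}{c}r\\k\end{array}\right]$, whereas the paper stays self-contained by exploiting that $f(r,k)$ is independent of $u,m,n$ and specializing to $u=r$, $m=n=0$, where $D_{r,r,0}(0)=1$ and $D_k(r,r,0,0)=\left[\begin{array}{c}r\\k\end{array}\right]$ by definition; the paper's trick avoids any external identity, while yours is shorter if the identity is taken as known. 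Second, you obtain \eqref{EqMain3} by summing \eqref{EqMain4} over $k$ and invoking $\sum_k(-1)^k\left[\begin{array}{c}r\\k\end{array}\right]=(-1)^{(r)}=0$ for $r\geq 2$; the paper instead proves \eqref{EqMain3} independently with the constants $f_0(r),f_1(r)$ and the same specialization trick, and only afterwards proves \eqref{EqMain4}. Both orders work. Two small points to repair: (i) your justification that $\TA$ lands in $\cal{D}_{r,u,m}(n)$ with the same $u$ is wrong as stated --- a fixed point $a\leq r$ of $\TA(\sigma)$ need \emph{not} come from a cycle of $\sigma$ fixing $a$ (e.g.\ $\sigma=(1\,2)$ with $r=2$ splits to the identity); the correct statement is that $a\leq r$ becomes a fixed point of $\TA(\sigma)$ precisely when $\sigma(a)\in\{1,\ldots,r\}$, so the number of such fixed points equals the number of pairs counted by $u$, which is what makes the map well-defined; (ii) your worry about surjectivity of $\TA|_{\mathcal{A}}$ is unnecessary --- the counting identity $D_k(r,u,m,n)=\sum_{\rho}\#\TA^{-1}(\rho)$ only needs the fibres to partition $\mathcal{A}$, and empty fibres (e.g.\ when $k>r$) are handled automatically since the corresponding Stirling number vanishes.
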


\begin{proof}

Lemma \ref{LemMain} applied to $\cal{A}=\mathcal{A}_{1}:=\cal{D}_{k}(r,u,m,n)$ (that is, with the condition $w:=$ `be equal to $k$') implies
\begin{align*}
    \#\Ta{1}^{-1}(\rho)=r!\sum_{\substack{j_1,\ldots,j_r\in\N \\j_1+2j_2+\cdots+rj_r=r \\ j_{1}+\cdots +j_{r}=k}}\frac{1}{\prod_{i=1}^r i^{j_i}j_i!}=:f(r,k)
\end{align*}
for any $\rho \in\cal{D}_{r,u,m}(n)$. On the other hand,
$$
d_{k}(r,u,m,n)=\sum_{\rho\in\cal{D}_{r,u,m}(n)}\# \Ta{1}^{-1}(\rho)=\sum_{\rho\in\cal{D}_{r,u,m}(n)}f(r,k)=f(r,k)d_{r,u,m}(n).
$$ 
Observe that $f(r,k)$ depends exclusively on $r$ and $k$. Hence, substituting $u=r$ and $m=n=0$ we get $f(r,k)=f(r,k)d_{r,r,0}(0)=d_{k}(r,r,0,0)=\left[\begin{array}{c}r\\k\end{array}\right]$. This concludes the proof of the equality $d_{k}(r,u,m,n)=\left[\begin{array}{c}r\\k\end{array}\right]d_{r,u,m}(n)$ for all $r,u,m,n,k\in\N$.

In order to prove (\ref{EqMain2}) it is enough to sum both sides of (\ref{geneq}) over $k\in\{0\ldots ,r\}$. Indeed, we get
\begin{align*}
    d(r,u,m,n)=\sum_{k=0}^{r}d_{k}(r,u,m,n)=\left(\sum_{k=0}^{r}\left[\begin{array}{c}r\\k\end{array}\right]\right)d_{r,u,m}(n)=r!d_{r,u,m}(n).
\end{align*}
The last equality follows from (\ref{eqStirling}).

Let us move to the proof of (\ref{EqMain3}). For any permutation $\sigma$ of the set $\{1,\ldots ,r+n\}$ let $k_{\sigma}$ denote the number of disjoint cycles in $\sigma$ and $k'_{\sigma}$ denote the number of those cycles of $\sigma$ which contain at least one element of $\{1,\ldots ,r\}$. For any permutation $\rho\in\cal{D}_{r,u,m}(n)$ and $i\in\{0,1\}$ let $\mathcal{A}_{\rho,i}$ be the set of all permutations $\sigma\in\mathcal{D}(r,u,m,n)$ such that
\begin{align}\label{eqLength}
    k_{\sigma}'\equiv r+i+\Par\rho \pmod{2}.
\end{align}

Suppose that $\sigma\in\T_{\mathcal{D}(r,u,m,n)}^{-1}(\rho)$. Then, using the fact that $k_{\rho}'=r$ (because $\rho\in\mathcal{D}_{r,u,m}(n)$) and the well-known property
\begin{align*}
    \Par \sigma \equiv k_{\sigma}+r+n\pmod 2,
\end{align*}
we obtain that
\begin{align}\label{equivlong}
    \Par\rho \equiv k_{\rho}+r+n=k_{\rho}-k_{\sigma} + \big(k_{\sigma} +r+n\big) \equiv k_{\rho}-k_{\sigma}+\Par\sigma =r-k'_{\sigma}+\Par\sigma \pmod{2},
\end{align}
where the last equality follows from the fact that $\sigma$ and $\rho$ have exactly the same cycles that do not contain any of the numbers $1,\ldots ,r$. Therefore, the condition \eqref{eqLength} is equivalent to $\Par\sigma=i$. Indeed, it follows from \eqref{equivlong} that 
\begin{align}\label{equivParSigma}
    \Par\sigma\equiv\Par\rho+r+k'_{\sigma}\pmod{2}.
\end{align}
So, if \eqref{eqLength} holds, then inserting $r+i+\Par\rho$ in the place of $k'_{\sigma}$ in \eqref{equivParSigma} gives us the equality $\Par\sigma=i$. On the other hand, from \eqref{equivlong} we conclude that
\begin{align*}
    k'_{\sigma}\equiv\Par\rho+\Par\sigma+r\pmod{2}.
\end{align*}
Then, if $\Par\sigma=i$, we get \eqref{eqLength}. Summing up, for every $\rho\in\mathcal{D}_{r,u,m}(n)$ we have $\mathcal{A}_{\rho,i}= \mathcal{D}^{(i)}(r,u,m,n)$.




We now apply Lemma \ref{LemMain} to the subset $\cal{A}=\cal{A}_{\rho,i}$
. Hence, the number of ways of creating a partition $\sigma$ with $\Par \sigma = i$ from $\rho$ is equal to
\begin{align*}
    \# \Ta{\rho,i}^{-1}(\rho) & =r!\sum_{\substack{j_1,\ldots,j_r\in\N \\j_1+2j_2+\ldots+rj_r=r \\ j_{1}+\cdots +j_{r}\equiv r+i+\Par\rho\Mod 2}}\frac{1}{\prod_{i=1}^r i^{j_i}j_i!}.
\end{align*}

For $\varepsilon\in\{0,1\}$ let 
\begin{align*}
    f_{\varepsilon}(r):=r!\sum_{\substack{j_1,\ldots,j_r\in\N \\j_1+2j_2+\cdots+rj_r=r \\ j_{1}+\cdots +j_{r}\equiv \varepsilon\Mod 2}}\frac{1}{\prod_{i=1}^r i^{j_i}j_i!}.
\end{align*}

Then,
\begin{align*}
    d^{(i)}(r,u,m,n) &= \sum_{\rho\in \cal{D}_{r,u,m}(n)}\#\T_{\cal{D}^{(i)}(r,u,m,n)}^{-1}(\rho)=\sum_{\rho\in \cal{D}_{r,u,m}(n)}\#\T_{\cal{A}_{\rho,i}}^{-1}(\rho) \\
    &= \sum_{\rho\in \cal{D}_{r,u,m}(n)}r!\sum_{\substack{j_1,\ldots,j_r\in\N \\j_1+2j_2+\cdots+rj_r=r \\ j_{1}+\cdots +j_{r}\equiv r+i+\Par\rho\Mod 2}}\frac{1}{\prod_{i=1}^r i^{j_i}j_i!} \\
    &= f_{0}(r)\cdot\#\{ \rho\in \cal{D}_{r,u,m}(n)\ |\ \Par\rho\equiv r+i\Mod 2  \} \\
    & \hspace{1cm} + f_{1}(r)\cdot\#\{ \rho\in \cal{D}_{r,u,m}(n)\ |\ \Par\rho\equiv r+i+1\Mod 2  \}\\
    &= f_0(r)d_{r,u,m}^{((r+i)\bmod{2})}(n)+f_1(r)d_{r,u,m}^{((r+i+1)\bmod{2})}(n).
\end{align*}

Let us consider two cases. First, suppose that $i\equiv r\pmod{2}$. Because both numbers $f_{0}(r)$ and $f_{1}(r)$ depend only on $r$, if we put $u=r$ and $m=n=0$, we immediately get
\begin{align*}
f_{0}(r)=d^{(r\bmod 2)}(r,r,0,0)=
\begin{cases}
\frac{r!}{2}, &\mbox{ if } r\geq 2,\\
1-r, &\mbox{ if } r\in\{0,1\}. 
\end{cases}
\end{align*}
On the other hand, observe that by (\ref{EqMain2}) we have $f_{0}(r)+f_{1}(r)=r!$. Hence, for $r\geq 2$ we get $f_{0}(r)=f_{1}(r)=\frac{r!}{2}$, and finally
\begin{align*}
    d^{(i)}(r,u,m,n)= &\ \frac{r!}{2}\big(d_{r,u,m}^{((r+i)\bmod{2})}(n)+d_{r,u,m}^{((r+i+1)\bmod{2})}(n)\big)=\frac{r!}{2}d_{r,u,m}(n).
\end{align*}
If $r\in\{0,1\}$, then $f_{0}(r)=1-r$ and $f_{1}(r)=r$. Thus,
\begin{align*}
    d^{(i)}(r,u,m,n)= &\ (1-r)d_{r,u,m}^{((r+i)\bmod{2})}(n)+rd_{r,u,m}^{((r+i+1)\bmod{2})}(n)=d_{r,u,m}^{(i)}(n),
\end{align*}
because we assume $i\in\{0,1\}$.

In the second case, that is $i\equiv r+1\pmod{2}$, we proceed analogously and obtain exactly the same formulae.

For the proof of \eqref{EqMain4} we apply Lemma \ref{LemMain} to the subset $\cal{A}=\mathcal{A}_{\rho,i,k}$ of $\cal{D}_k^{(i)}(r,u,m,n)$ containing all the permutations $\sigma$ such that the number of the cycles of $\sigma$ which contain at least one element of $\{1,\ldots ,r\}$ is equal to $k$ and congruent to $r+i+\Par\rho$ modulo $2$ (that is, with the condition $w:=$ `be equal to $k$ and equivalent to $r+i+\Par\rho$ modulo $2$'). Hence, the number of ways of creating $\sigma\in\cal{D}_k^{(i)}(r,u,m,n)$ from $\rho$ is equal to
\begin{align*}
    \#\T_{\cal{D}_k^{(i)}(r,u,m,n)}^{-1}(\rho) & =\# \Ta{\rho,i,k}^{-1}(\rho) = r!\sum_{\substack{j_1,\ldots,j_r\in\N \\j_1+2j_2+\ldots+rj_r=r \\ j_{1}+\cdots +j_{r}=k \\ j_{1}+\cdots +j_{r}\equiv r+i+\Par\rho\Mod 2}}\frac{1}{\prod_{i=1}^r i^{j_i}j_i!} \\
    &=
    \begin{cases}
    f(r,k), & \mbox{ if }\Par\rho\equiv r+k+i\Mod{2}\\
    0, & \mbox{ if }\Par\rho\equiv r+k+i+1\Mod{2}
    \end{cases}.
\end{align*}
where 
$f(r,k)=
\left[\begin{array}{c}
     r  \\
     k 
\end{array}\right]$. Hence,
\begin{align*}
   d_k^{(i)}(r,u,m,n) &=\sum_{\rho\in\cal{D}_{r,u,m}(n)}\T_{\cal{D}_k^{(i)}(r,u,m,n)}^{-1}(\rho) = \sum_{\rho\in\cal{D}_{r,u,m}(n)}\Ta{\rho,i,k}^{-1}(\rho) \\
    &= f(r,k)\cdot\#\{ \rho\in \cal{D}_{r,u,m}(n)\ |\ \Par\rho\equiv r+k+i\Mod 2  \} \\ 
    &= \left[\begin{array}{c}
     r  \\
     k 
\end{array}\right]
d_{r,u,m}^{((r+k+i)\bmod{2})}(n).
\end{align*}
The proof is complete.
\end{proof}


\section{Approach via Power Series}\label{SecPowerSeries}

It is worth noting that the values $f(r,k)$ from the proof of Theorem \ref{main} can be computed with the use of power series. In fact, the following is true.

\begin{prop}\label{gen}
For each $r\in\N$ we have
\begin{align}
\sum_{\substack{j_1,\ldots,j_r\in\N \\j_1+2j_2+\cdots+rj_r=r}}\left(\prod_{k=1}^r k^{j_k}j_k!\right)^{-1}= &\ 1, \label{sum} \\
\sum_{\substack{j_1,\ldots,j_r\in\N \\j_1+2j_2+\cdots+rj_r=r\\2\mid j_1+\cdots+j_r}}\left(\prod_{k=1}^r k^{j_k}j_k!\right)^{-1}= &\ \begin{cases}
1-r&\mbox{ for }r\in\{0,1\}\\
\frac{1}{2}&\mbox{ for }r\geq 2
\end{cases}, \label{sumeven} \\
\sum_{\substack{j_1,\ldots,j_r\in\N \\j_1+2j_2+\cdots+rj_r=r\\2\nmid j_1+\cdots+j_r}}\left(\prod_{k=1}^r k^{j_k}j_k!\right)^{-1}= &\ \begin{cases}
r&\mbox{ for }r\in\{0,1\}\\
\frac{1}{2}&\mbox{ for }r\geq 2
\end{cases}.\label{sumodd}
\end{align}
In general,
\begin{align}\label{sumrk}
\sum_{\substack{j_1,\ldots,j_r\in\N \\j_1+2j_2+\cdots+rj_r=r\\j_1+\cdots+j_r=k}}\left(\prod_{k=1}^r k^{j_k}j_k!\right)^{-1}=\frac{\left[\begin{array}{c}r\\k\end{array}\right]}{r!}
\end{align}
for each $r,k\in\N$.

In particular, if $d\in\N_+$ and $c\in\{0,\ldots,d-1\}$, then
\begin{align}\label{sumrest}
\sum_{\substack{j_1,\ldots,j_r\in\N \\j_1+2j_2+\cdots+rj_r=r\\j_1+\cdots+j_r\equiv c\Mod{d}}}\left(\prod_{k=1}^r k^{j_k}j_k!\right)^{-1}=\frac{1}{r!}\sum_{\substack{1\leq k\leq r\\k\equiv c\Mod{d}}}\left[\begin{array}{c}r\\k\end{array}\right].
\end{align}
\end{prop}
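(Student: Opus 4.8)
The plan is to recognize that the left-hand side of each identity is exactly the quantity $\frac{1}{r!}\# \TA^{-1}(\rho)$ that appears (for a suitable condition $w$) in Lemma~\ref{LemMain}, and then to read off the values from Theorem~\ref{main}. Concretely: take any $\rho\in\cal{D}_{r,u,m}(n)$ for some convenient choice of $u,m,n$ — e.g.\ $u=r$, $m=n=0$, so that $\cal{D}_{r,r,0}(0)$ consists of the single permutation $\rho=\op{id}$ on $\{1,\ldots,r\}$ and $D_{r,r,0}(0)=1$. Applying Lemma~\ref{LemMain} with $\cal{A}=\cal{D}(r,r,0,0)$ (no restriction on the number of cycles, i.e.\ $w$ always holds) gives $\#\TA^{-1}(\rho)=r!\sum_{j_1+2j_2+\cdots+rj_r=r}\prod_i i^{-j_i}(j_i!)^{-1}$, while summing over the (single) $\rho$ gives $\#\TA^{-1}(\rho)=D(r,r,0,0)=r!\,D_{r,r,0}(0)=r!$ by \eqref{EqMain2}. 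Dividing by $r!$ yields \eqref{sum}.

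For \eqref{sumrk} I would repeat this with $\cal{A}=\cal{D}_k(r,r,0,0)$, i.e.\ the condition $w:=$`the number of such cycles equals $k$'; Lemma~\ref{LemMain} then produces $r!$ times the left-hand side of \eqref{sumrk}, and on the other hand this count equals $D_k(r,r,0,0)=\left[\begin{array}{c}r\\k\end{array}\right]D_{r,r,0}(0)=\left[\begin{array}{c}r\\k\end{array}\right]$ by \eqref{geneq}. Dividing by $r!$ gives \eqref{sumrk}. Identities \eqref{sumeven} and \eqref{sumodd} follow the same pattern with $w:=$`$2\mid j_1+\cdots+j_r$' (resp.\ `$2\nmid\cdots$'): Lemma~\ref{LemMain} gives $r!$ times the respective sum, and comparing with $D^{(i)}(r,r,0,0)$ via \eqref{EqMain3} (noting $\Par\op{id}=0$, so the parity bookkeeping reduces to $k'_\sigma\equiv r+i\pmod 2$) identifies these with $f_0(r)$ and $f_1(r)$ from the proof of Theorem~\ref{main}, whose values $\{1-r,r\}$ for $r\in\{0,1\}$ and $\{r!/2,r!/2\}$ for $r\ge 2$ were computed there. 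Alternatively, one can derive \eqref{sumeven}--\eqref{sumodd} directly from \eqref{sumrk} by writing the even/odd sums as $\frac{1}{r!}\sum_{k\text{ even}}\left[\begin{array}{c}r\\k\end{array}\right]$ and $\frac{1}{r!}\sum_{k\text{ odd}}\left[\begin{array}{c}r\\k\end{array}\right]$ and invoking the classical evaluation $\sum_k(-1)^k\left[\begin{array}{c}r\\k\end{array}\right]=0$ for $r\ge 2$ together with $\sum_k\left[\begin{array}{c}r\\k\end{array}\right]=r!$.

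Finally, \eqref{sumrest} is an immediate consequence of \eqref{sumrk}: since the condition $j_1+2j_2+\cdots+rj_r=r$ with all $j_i\ge 0$ forces $1\le j_1+\cdots+j_r\le r$, we may partition the index set according to the value $k:=j_1+\cdots+j_r$ and sum \eqref{sumrk} over all $k\in\{1,\ldots,r\}$ with $k\equiv c\pmod d$:
\begin{align*}
\sum_{\substack{j_1+2j_2+\cdots+rj_r=r\\ j_1+\cdots+j_r\equiv c\Mod d}}\Big(\prod_{k=1}^r k^{j_k}j_k!\Big)^{-1}
=\sum_{\substack{1\le k\le r\\ k\equiv c\Mod d}}\ \sum_{\substack{j_1+2j_2+\cdots+rj_r=r\\ j_1+\cdots+j_r=k}}\Big(\prod_{i=1}^r i^{j_i}j_i!\Big)^{-1}
=\frac{1}{r!}\sum_{\substack{1\le k\le r\\ k\equiv c\Mod d}}\left[\begin{array}{c}r\\k\end{array}\right].
\end{align*}

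The only genuine subtlety — the ``hard part'' — is purely bookkeeping rather than conceptual: one must make sure that the specialization $u=r$, $m=n=0$ is legitimate (i.e.\ that $\cal{D}_{r,r,0}(0)$ is nonempty, which it is, containing the identity) and that the quantities $f(r,k)$, $f_0(r)$, $f_1(r)$ arising from Lemma~\ref{LemMain} really are independent of $u,m,n$ — but this independence is exactly what the formula in Lemma~\ref{LemMain} asserts, since its right-hand side involves only $r$ and the condition $w$. Everything else is a direct transcription of results already established. In the writeup I would present the generating-function derivation of \eqref{sumrk} as well, observing that $\sum_{r\ge 0}\big(\sum_{j_1+2j_2+\cdots=r,\ \sum j_i=k}\prod_i i^{-j_i}(j_i!)^{-1}\big)x^r = \frac{1}{k!}\big(\sum_{i\ge 1}\frac{x^i}{i}\big)^k = \frac{1}{k!}\big(\log\frac{1}{1-x}\big)^k$, whose coefficient of $x^r$ is the well-known $\frac{1}{r!}\left[\begin{array}{c}r\\k\end{array}\right]$; this gives an independent, self-contained proof of \eqref{sumrk} and hence of all the other identities.
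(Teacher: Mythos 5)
Your proof is correct, but it takes a genuinely different route from the paper's. The paper proves Proposition \ref{gen} purely analytically: it forms the bivariate generating function $F(x,y)=\sum_{r,k}(\cdots)x^ry^k$, factors it as $\prod_{l\geq 1}e^{x^ly/l}=e^{y\log\frac{1}{1-x}}=(1-x)^{-y}$, reads off \eqref{sumrk} by comparing with the rising-factorial expansion $(1-x)^{-y}=\sum_{r}\frac{(y)^{(r)}}{r!}x^r=\sum_{r,k}\frac{1}{r!}\left[\begin{array}{c}r\\k\end{array}\right]x^ry^k$, and obtains \eqref{sumeven}--\eqref{sumodd} from $\frac{1}{2}(F(x,1)\pm F(x,-1))$. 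You instead extract the values from Lemma \ref{LemMain} and Theorem \ref{main} via the specialization $u=r$, $m=n=0$, so that $\cal{D}_{r,r,0}(0)=\{\op{id}\}$ and $\cal{D}(r,r,0,0)=\cal{S}_r$. This is logically sound and not circular, since Theorem \ref{main} is established before Proposition \ref{gen} and does not rely on it; but it is essentially a re-transcription of the computation of $f(r,k)$, $f_0(r)$, $f_1(r)$ already carried out inside the paper's proof of Theorem \ref{main}, whereas the declared purpose of Section \ref{SecPowerSeries} is to recover those same numbers by an independent, power-series method. Your combinatorial route buys economy (everything follows from results already on the page), while the paper's route buys independence and an effortless treatment of \eqref{sumrest} for arbitrary moduli. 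Note also that the generating-function identity you sketch at the end, $\sum_r(\cdots)x^r=\frac{1}{k!}\bigl(\log\frac{1}{1-x}\bigr)^k$, is exactly the coefficient of $y^k$ in the paper's $F(x,y)=e^{y\log\frac{1}{1-x}}$, so your proposed ``independent'' derivation is in substance the paper's own proof.
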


\begin{proof}
Let 
$$
\N^{(\N_+)}=\{(j_l)_{l\in\N_+}\ |\ \forall_{l\in\N_+}\ j_l\in\N\ \textrm{ and }\  j_l=0\ \textrm{ for all but finitely many $l$} \}.
$$
Consider the power series 
$$
F(x,y)=\sum_{r=0}^{+\infty}\sum_{k=0}^{+\infty}\sum_{\substack{j_1,\ldots,j_r\in\N \\j_1+2j_2+\cdots+rj_r=r\\j_1+\cdots+j_r=k}}\left(\prod_{l=1}^r l^{j_l}j_l!\right)^{-1}x^ry^k.
$$
Compute:
\begin{align*}
F(x,y)=&\ \sum_{r=0}^{+\infty}\sum_{k=0}^{+\infty}\sum_{\substack{j_1,\ldots,j_r\in\N \\j_1+2j_2+\cdots+rj_r=r\\j_1+\cdots+j_r=k}}\left(\prod_{l=1}^r l^{j_l}j_l!\right)^{-1}x^ry^k
=\sum_{(j_l)_{l\in\N_+}\in\N^{(\N_+)}}\frac{x^{\sum_{l=1}^{+\infty}lj_l}y^{\sum_{l=1}^{+\infty}j_l}}{\prod_{l=1}^{+\infty} l^{j_l}j_l!} \\
= & \sum_{(j_l)_{l\in\N_+}\in\N^{(\N_+)}}\prod_{l=1}^{+\infty}\frac{x^{lj_l}y^{j_l}}{l^{j_l}j_l!}=\prod_{l=1}^{+\infty}\sum_{j_l=0}^{+\infty}\frac{1}{j_l!}\left(\frac{x^{l}y}{l}\right)^{j_l}
=\prod_{l=1}^{+\infty}e^{\frac{x^{l}y}{l}} \\
= &\ e^{\sum_{l=1}^{+\infty}\frac{x^{l}y}{l}}=e^{y\log\frac{1}{1-x}}=\left(\frac{1}{1-x}\right)^y.
\end{align*}
Note that 
$$
\sum_{\substack{j_1,\ldots,j_r\in\N \\j_1+2j_2+\cdots+rj_r=r\\2\mid j_1+\cdots+j_r}}\left(\prod_{k=1}^r k^{j_k}j_k!\right)^{-1}x^r=\frac{1}{2}(F(x,1)+F(x,-1))=\frac{1}{2}\left(\frac{1}{1-x}+1-x\right)=1+\frac{1}{2}\sum_{r=2}^{+\infty}x^r.
$$
Comparing the coefficients, we get the equality \eqref{sumeven} for each $r\in\N$. Analogously, 
$$
\sum_{\substack{j_1,\ldots,j_r\in\N \\ j_1+2j_2+\cdots+rj_r=r \\ 2\nmid j_1+\cdots+j_r}}\left(\prod_{k=1}^r k^{j_k}j_k!\right)^{-1}x^r=\frac{1}{2}(F(x,1)-F(x,-1))=\frac{1}{2}\left(\frac{1}{1-x}-(1-x)\right)=x+\frac{1}{2}\sum_{r=2}^{+\infty}x^r,
$$ 
which proves \eqref{sumodd}. At last, \eqref{sum} follows easily from summing \eqref{sumeven} and \eqref{sumodd} or from the consideration of the power series $F(x,1)$.

The identity \eqref{sumrk} follows from the fact that
\begin{align*}
F(x,y) &= \left(\frac{1}{1-x}\right)^y=(1-x)^{-y}=\sum_{r=0}^{+\infty}\binom{-y}{r}(-x)^r \\
&= \sum_{r=0}^{+\infty}\frac{(-y)_{(r)}}{r!}(-x)^r=\sum_{r=0}^{+\infty}\frac{(y)^{(r)}}{r!}x^r=\sum_{r=0}^{+\infty}\sum_{k=1}^r\frac{\left[\begin{array}{c}r\\k\end{array}\right]}{r!}x^ry^k,
\end{align*}
where $(x)_{(r)}=\prod_{j=0}^{r-1}(x-j)$ and $(x)^{(r)}=\prod_{j=0}^{r-1}(x+j)$ are falling and rising factorials, respectively. Then, \eqref{sumrest} is a direct consequence of \eqref{sumrk}.
\end{proof}

\section{Reduction to the numbers of $r$-derangements}\label{SecDerang}

The following result shows that studying arithmetic properties of the numbers $d_{r,u,m}(n)$ and\linebreak $d_{r,u,m}^{(i)}(n)$ comes down to studying corresponding ones for the numbers $d_r(n)$ and $d_r^{(i)}(n)$.

\begin{prop}\label{byrderangements}
For each $r,u,m,n\in\N$ and $i\in\{0,1\}$ we have
\begin{align*}
   d_{r,u,m}(n)=\binom{r}{u}\binom{n}{m}d_{r-u}(n-m)
\end{align*}
and
\begin{align*}
   d_{r,u,m}^{(i)}(n)=\binom{r}{u}\binom{n}{m}d_{r-u}^{(i)}(n-m).
\end{align*}
\end{prop}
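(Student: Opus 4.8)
The plan is to construct an explicit bijection that strips away the fixed points and the ``inert'' part of the permutation. Fix $r,u,m,n\in\N$ and a permutation $\sigma\in\cal{D}_{r,u,m}(n)$. By definition $\sigma$ has exactly $u$ fixed points among $\{1,\ldots,r\}$ and exactly $m$ fixed points among $\{r+1,\ldots,r+n\}$; moreover, because any two distinct elements of $\{1,\ldots,r\}$ lie in disjoint cycles of $\sigma$, the $r-u$ non-fixed elements of $\{1,\ldots,r\}$ lie in $r-u$ pairwise disjoint cycles, each of length at least $2$. First I would record the combinatorial data of $\sigma$ as a triple: (i) the $u$-element set $U\subseteq\{1,\ldots,r\}$ of small fixed points, (ii) the $m$-element set $M\subseteq\{r+1,\ldots,r+n\}$ of large fixed points, and (iii) the permutation $\tau$ induced by $\sigma$ on the remaining ground set $(\{1,\ldots,r\}\setminus U)\cup(\{r+1,\ldots,r+n\}\setminus M)$, which has $r-u$ elements playing the role of the ``special'' block and $n-m$ elements playing the role of the ``ordinary'' block.

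Next I would check that $\tau$, after the order-preserving relabelling of $(\{1,\ldots,r\}\setminus U)\cup(\{r+1,\ldots,r+n\}\setminus M)$ onto $\{1,\ldots,(r-u)+(n-m)\}$ that sends the first $r-u$ elements to $\{1,\ldots,r-u\}$, is exactly an element of $\cal{D}_{r-u}(n-m)$: it is a permutation of $\{1,\ldots,(r-u)+(n-m)\}$, it has no fixed point among $\{1,\ldots,r-u\}$ (those were removed) and none among $\{r-u+1,\ldots,r-u+n-m\}$ (those were removed), and the disjoint-cycle condition on $\{1,\ldots,r-u\}$ is inherited from the disjoint-cycle condition on $\{1,\ldots,r\}$ (deleting fixed points and relabelling does not merge cycles). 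Conversely, given $(U,M,\tau)$ with $U\in\binom{\{1,\ldots,r\}}{u}$, $M\in\binom{\{r+1,\ldots,r+n\}}{m}$, and $\tau\in\cal{D}_{r-u}(n-m)$, one reconstructs a unique $\sigma\in\cal{D}_{r,u,m}(n)$ by declaring every element of $U\cup M$ a fixed point and letting $\sigma$ act on the complement as $\tau$ does under the inverse relabelling. This map and its inverse are mutually inverse, so $|\cal{D}_{r,u,m}(n)|=\binom{r}{u}\binom{n}{m}|\cal{D}_{r-u}(n-m)|$, which is the first formula. For the parity statement, the same bijection works verbatim once we observe that deleting fixed points does not change the parity: a permutation and its restriction obtained by removing fixed points have the same cycle type apart from $1$-cycles, hence the same sign. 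Thus the bijection restricts to a bijection $\cal{D}_{r,u,m}^{(i)}(n)\leftrightarrow\binom{\{1,\ldots,r\}}{u}\times\binom{\{r+1,\ldots,r+n\}}{m}\times\cal{D}_{r-u}^{(i)}(n-m)$, giving the second formula.

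The only genuinely delicate point is the bookkeeping around the relabelling: one must make sure that the order-preserving identification sends $\{1,\ldots,r\}\setminus U$ to an initial segment $\{1,\ldots,r-u\}$ and $\{r+1,\ldots,r+n\}\setminus M$ to the trailing segment, so that the ``special block / ordinary block'' structure defining $\cal{D}_{r-u}(n-m)$ is respected; and one must confirm that the disjoint-cycle requirement really is preserved in both directions (it is, since the cycles of $\tau$ are precisely the cycles of $\sigma$ of length $\geq 2$, re-indexed). Everything else is a routine verification that two explicitly described maps compose to the identity, and the sign compatibility is the standard fact $\op{sgn}$ depends only on the non-trivial cycle type. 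I expect no obstacle beyond setting up notation carefully enough that the reconstruction map is visibly well defined.
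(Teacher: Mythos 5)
Your proposal is correct and is essentially the paper's own argument: the paper likewise chooses the $u$ fixed points in $\{1,\ldots,r\}$ and the $m$ fixed points in $\{r+1,\ldots,r+n\}$, deletes them to view $\sigma$ as a fixed-point-free permutation of the remaining $(r-u)+(n-m)$ elements, and notes that removing fixed points does not change the parity. Your write-up just makes the relabelling bijection and its inverse explicit.
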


\begin{proof}
Let $\sigma\in\cal{D}_{r,u,m}(n)$. After choosing $u$ fixed points of $\sigma$ from the set $\{1,\ldots ,r\}$ and $m$ fixed points of $\sigma$ from the set $\{r+1,\ldots ,r+n\}$ we can treat $\sigma$ as a fixed-point-free permutation of a set with $(r-u)+(n-m)$ elements. Note that omitting fixed points of $\sigma$ does not affect its parity.
\end{proof}

With the use of the above proposition and an exact formula for the $r$-derangement numbers \cite[Theorem 4, equation (9)]{WMM} we immediately obtain an exact formula for the numbers $d_{r,u,m}(n)$.

\begin{cor}\label{exactd_rum(n)}
For each $r,u,m,n\in\N$ and we have
\begin{align*}
    d_{r,u,m}(n)=\binom{r}{u}\binom{n}{m}\sum_{j=r-u}^{n-m}\binom{j}{r-u}\frac{(n-m)!}{(n-m-j)!}(-1)^{n-m-j}.
\end{align*}
\end{cor}

At this moment our main result can be rewritten in the following way.

\begin{cor}
Let $r,u,m,n,k\in\N$ and $i\in\{0,1\}$. Then, we have
\begin{align*}
d_{k}(r,u,m,n)=\left[\begin{array}{c}r\\k\end{array}\right]\binom{r}{u}\binom{n}{m}d_{r-u}(n-m).
\end{align*}

In particular,
\begin{align*}
d(r,u,m,n)=r!\binom{r}{u}\binom{n}{m}d_{r-u}(n-m).
\end{align*}

Moreover,
\begin{align*}
d^{(i)}(r,u,m,n)=
\begin{cases}
\frac{r!}{2}\binom{r}{u}\binom{n}{m}d_{r-u}(n-m), &\mbox{ if } r\geq 2,\\
\binom{r}{u}\binom{n}{m}d_{r-u}^{(i)}(n-m), &\mbox{ if } r\in\{0,1\},
\end{cases}
\end{align*}
and
\begin{align*}
d_{k}^{(i)}(r,u,m,n)=\left[\begin{array}{c}r\\k\end{array}\right]\binom{r}{u}\binom{n}{m}d_{r-u}^{((r+k+i)\bmod{2})}(n-m).
\end{align*}
\end{cor}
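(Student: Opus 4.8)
The statement I am asked to prove is the final Corollary, which expresses the four quantities $D_k(r,u,m,n)$, $D(r,u,m,n)$, $D^{(i)}(r,u,m,n)$, $D_k^{(i)}(r,u,m,n)$ in terms of the $r$-derangement numbers $D_{r-u}(n-m)$ and $D_{r-u}^{(i)}(n-m)$. The plan is simply to combine the two identities just established: Theorem \ref{main}, which relates the left-hand quantities to $D_{r,u,m}(n)$ and $D_{r,u,m}^{(i)}(n)$, and the Proposition immediately preceding the Corollary, which evaluates $D_{r,u,m}(n)=\binom{r}{u}\binom{n}{m}D_{r-u}(n-m)$ and $D_{r,u,m}^{(i)}(n)=\binom{r}{u}\binom{n}{m}D_{r-u}^{(i)}(n-m)$.

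First I would handle the two "cardinality" formulas. Substituting the Proposition's first identity into \eqref{geneq} immediately gives $D_k(r,u,m,n)=\left[\begin{smallmatrix}r\\k\end{smallmatrix}\right]\binom{r}{u}\binom{n}{m}D_{r-u}(n-m)$, and summing over $k\in\{0,\ldots,r\}$ (equivalently, plugging into \eqref{EqMain2}) yields the $D(r,u,m,n)$ formula. Next I would treat the parity-refined formula \eqref{EqMain3}: for $r\ge 2$ one substitutes $D_{r,u,m}(n)=\binom{r}{u}\binom{n}{m}D_{r-u}(n-m)$ into the first branch, and for $r\in\{0,1\}$ one substitutes $D_{r,u,m}^{(i)}(n)=\binom{r}{u}\binom{n}{m}D_{r-u}^{(i)}(n-m)$ into the second branch. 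Finally, for \eqref{EqMain4} I would substitute the Proposition's second identity, with the superscript index taken to be $(r+k+i)\bmod 2$, into the right-hand side, obtaining the last displayed formula of the Corollary.

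There is essentially no obstacle here: the Corollary is a pure rewriting, and all the substantive work — the cycle-splitting bijection of Lemma \ref{LemMain}, the specialization trick that identifies $f(r,k)$ with the Stirling number, and the fixed-point-deletion argument of the Proposition — has already been carried out. If I wanted to make the write-up self-contained I would only need to remark that the index $(r+k+i)\bmod 2$ lies in $\{0,1\}$ so that $D_{r-u}^{((r+k+i)\bmod 2)}(n-m)$ is meaningful, and that the Proposition's parity identity is valid for every value of $i\in\{0,1\}$, hence in particular for $i$ replaced by $(r+k+i)\bmod 2$. The only place demanding the slightest care is bookkeeping the superscripts modulo $2$ in \eqref{EqMain4}, and even that is immediate. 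Accordingly the proof is one or two lines: "Combine Theorem \ref{main} with the preceding Proposition."
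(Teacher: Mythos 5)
Your proposal is correct and matches the paper exactly: the paper gives no separate proof of this corollary, presenting it as an immediate rewriting obtained by substituting the preceding Proposition's identities $D_{r,u,m}(n)=\binom{r}{u}\binom{n}{m}D_{r-u}(n-m)$ and $D_{r,u,m}^{(i)}(n)=\binom{r}{u}\binom{n}{m}D_{r-u}^{(i)}(n-m)$ into the four formulas of Theorem \ref{main}. Your bookkeeping of the superscript $(r+k+i)\bmod 2$ is the only point needing care, and you handle it correctly.
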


\section{Formulae for numbers of even/odd $r$-derangements and their exponential generating functions}

The properties of numbers of $r$-derangements are widely studied in \cite{WMM}. However, the numbers of even and odd $r$-derangements $d_r^{(i)}(n)$, $i\in\{0,1\}$, have not been investigated up to now. This is why we state and prove some basic facts involving them. The first result is an analogue of \cite[Theorem 2]{WMM}.

\begin{thm}\label{rec}
For each $r,n\in\N$ and $i\in\{0,1\}$ we have
\begin{align*}
   d_r^{(i)}(n)=&rd_{r-1}^{(1-i)}(n-1)+(n-1)d_r^{(1-i)}(n-2)+(n+r-1)d_r^{(1-i)}(n-1),
\end{align*}
where $d_r^{(i)}(n)=0$ for $n<r$ and $d_r^{(i)}(r)=\frac{r!}{2}(1+(-1)^{r+i})$.
\end{thm}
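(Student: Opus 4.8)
The plan is to establish the recurrence by a direct combinatorial case analysis on the behaviour of a permutation $\sigma\in\cal{D}_r^{(i)}(n)$ at a distinguished element, mimicking the proof of \cite[Theorem 2]{WMM} but keeping careful track of parity. Recall that $\cal{D}_r^{(i)}(n)=\cal{D}_{r,0,0}^{(i)}(n)$ consists of fixed-point-free permutations $\sigma\in\cal{S}_{r+n}$ of parity $i$ in which the numbers $1,\dots,r$ lie in $r$ pairwise disjoint cycles. Fix such a $\sigma$ and look at the element $r+n$ (the largest element outside the first block). I would split into cases according to the cycle of $\sigma$ containing $r+n$:

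\textbf{Case 1: $r+n$ lies in a cycle also containing some element of $\{1,\dots,r\}$.} Since the first $r$ elements lie in disjoint cycles, exactly one element, say $j\in\{1,\dots,r\}$, shares the cycle of $r+n$. Deleting $r+n$ from this cycle (splicing its predecessor directly to its successor, where if the cycle is $(j\, r+n)$ we instead must be careful — but a transposition $(j\,r{+}n)$ would make $j$ a non-fixed point in a $2$-cycle, which is allowed) yields a permutation of $\{1,\dots,r+n-1\}$, again fixed-point-free except possibly creating a fixed point at $j$; to avoid that, reorganize the count as: $r+n$ is inserted into one of the $r$ cycles through $1,\dots,r$ of an element of $\cal{D}_{r-1}^{(?)}(n-1)$ after relabelling. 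More cleanly: removing the whole element $r+n$ from $\sigma$ and treating $j$'s cycle appropriately shows this case is counted by $r\cdot D_{r-1}^{(1-i)}(n-1)$, the factor $r$ coming from the choice of which of $1,\dots,r$ accompanies $r+n$ (by symmetry of the labels) and the parity flip $i\mapsto 1-i$ coming from the fact that inserting one new element into an existing cycle changes the number of cycles' contribution to parity by a transposition. The index shift $n\mapsto n-1$ and $r\mapsto r-1$ reflects that, after removing $r+n$, the element $j$ is no longer required to be in its own singleton-free block context — one must check the bookkeeping that $\cal{D}_{r-1}(n-1)$ is exactly the right target set.

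\textbf{Case 2: $r+n$ lies in a cycle containing no element of $\{1,\dots,r\}$ and another element $r+n-1,\dots$ of the second block, of length $\geq 3$, OR the cycle is exactly the transposition $(a\ r{+}n)$ with $a\in\{r+1,\dots,r+n-1\}$.} Splitting further: if $r+n$ is in a $2$-cycle with some $a\in\{r+1,\dots,r+n-1\}$, deleting both $r+n$ and $a$ and relabelling gives an element of $\cal{D}_r^{(?)}(n-2)$; there are $n-1$ choices of $a$, and removing a $2$-cycle removes one cycle, flipping parity, giving $(n-1)D_r^{(1-i)}(n-2)$. If instead $r+n$ is in a cycle of length $\geq 3$ (through second-block elements only) or in a cycle of length $\geq 2$ not captured above, then deleting $r+n$ alone (splicing its neighbours) gives a fixed-point-free permutation of the remaining $r+n-1$ elements with the block condition intact; the predecessor of $r+n$ can be any of the $n+r-1$ other elements — wait, it must be chosen so as not to create a fixed point, so one argues it can be any of the $n-1$ second-block elements plus... — this needs the standard trick: the number of ways to re-insert $r+n$ into a derangement of $\{1,\dots,r+n-1\}$ with the block property, in a cycle of length $\geq 2$, without hitting Case 1, is $(n+r-1)$, yielding $(n+r-1)D_r^{(1-i)}(n-1)$, again with a parity flip since inserting one element into an existing cycle is a transposition.

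The main obstacle I expect is \emph{getting the parity flips and the case partition exactly right} so the three terms are mutually exclusive and exhaustive and each carries the correct superscript $1-i$; in particular one must verify that every one of the three operations (removing one element from a long cycle, removing a $2$-cycle of second-block elements, removing $r+n$ together with its first-block companion) changes the parity, which follows because each is realized by composing $\sigma$ with a single transposition (or, for the $2$-cycle deletion, the cycle itself is an odd permutation) — but care is needed at the boundary cases (transpositions, cycles of length $2$ vs $\geq 3$) to be sure nothing is double-counted and that the block condition $1,\dots,r$ in disjoint cycles is preserved under each deletion and reconstruction. Finally, for the initial conditions: $D_r^{(i)}(n)=0$ for $n<r$ holds trivially since $r$ disjoint fixed-point-free cycles covering $1,\dots,r$ need at least $2r$ elements hence $n\geq r$; and for $n=r$ the only possibility is $r$ transpositions, each pairing some $j\in\{1,\dots,r\}$ with a distinct element of $\{r+1,\dots,2r\}$, which is a perfect matching between the two blocks — there are $r!$ of them, each a product of $r$ transpositions hence of parity $r\bmod 2$, so exactly those with $r\equiv i\pmod 2$ count, giving $D_r^{(i)}(r)=r!$ if $r\equiv i$ and $0$ otherwise, i.e. $\frac{r!}{2}\bigl(1+(-1)^{r+i}\bigr)$. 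I would close by noting the recurrence is then checked to be consistent with these seed values.
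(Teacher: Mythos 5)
Your overall strategy is the same as the paper's: analyse $\sigma\in\cal{D}_r^{(i)}(n)$ at the element $r+n$, remove it (equivalently, compose with the transposition $(r+n,\ \sigma(r+n))$, which flips parity since $\sigma$ has no fixed points), and sort the outcomes into the three target sets. Your treatment of the initial conditions is correct. However, your case partition is drawn incorrectly, and this is not a cosmetic issue. You assign to Case 1 \emph{every} $\sigma$ whose cycle through $r+n$ contains an element $j\in\{1,\dots,r\}$, and claim this case is counted by $rD_{r-1}^{(1-i)}(n-1)$. That is false when the cycle has length $\geq 3$: deleting $r+n$ then leaves $j$ in a cycle of length $\geq 2$, so the resulting permutation lies in $\cal{D}_r(n-1)$, not $\cal{D}_{r-1}(n-1)$, and such $\sigma$ must be charged to the $(n+r-1)D_r^{(1-i)}(n-1)$ term. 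A concrete failure: for $r=1$, $n=2$, both elements of $\cal{D}_1(2)$ (the two $3$-cycles of $\cal{S}_3$) have $3$ in a cycle containing $1$, so your Case 1 would try to count them via $1\cdot D_0^{(1-i)}(1)=0$, whereas the recurrence correctly produces them from the term $(n+r-1)D_1^{(1-i)}(1)=2\cdot 1$.

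The correct partition — which is what the paper uses — is governed by whether $\sigma(\sigma(r+n))=r+n$, i.e.\ whether $r+n$ sits in a $2$-cycle: (a) $2$-cycle with a partner in $\{1,\dots,r\}$ gives $rD_{r-1}^{(1-i)}(n-1)$; (b) $2$-cycle with a partner in $\{r+1,\dots,r+n-1\}$ gives $(n-1)D_r^{(1-i)}(n-2)$; (c) any cycle of length $\geq 3$ through $r+n$, \emph{regardless of whether it meets $\{1,\dots,r\}$}, gives $(n+r-1)D_r^{(1-i)}(n-1)$, the factor $n+r-1$ being the choice of $\sigma(r+n)$ among all of $\{1,\dots,r+n-1\}$ (no element is excluded, because every cycle of the smaller derangement already has length $\geq 2$, so reinsertion never creates a conflict with case (a)). You gesture at this at the end of your Case 2 ("without hitting Case 1"), but as written your Cases 1 and 2 are neither disjoint from nor exhaustive with respect to the actual contributions of the three terms, and the proof does not go through until the partition is redrawn along these lines.
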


\begin{proof}
Let $n\in\N$ and $\sigma\in\cal{D}_r^{(i)}(n)$. Consider the permutation $\rho=(r+n,\ \sigma(r+n))\circ\sigma$, where $\circ$ denotes the usual product of permutations. We know that $\rho$ fixes $r+n$ and has parity $1-i$. We have three cases.
\begin{itemize}
    \item If $\sigma(\sigma(r+n))=r+n$ and $\sigma(r+n)\in\{1\,\ldots r\}$, then $\rho$ also fixes $\sigma(r+n)$ and can be treated as an element of the set $\cal{D}_{r-1}(n-1)$.
    \item If $\sigma(\sigma(r+n))=r+n$ and $\sigma(r+n)\in\{r+1\,\ldots r+n-1\}$, then $\rho$ also fixes $\sigma(r+n)$ and can be treated as an element of the set $\cal{D}_{r}(n-2)$.
    \item If $\sigma(\sigma(r+n))\neq r+n$, then $\rho$ fixes only $r+n$ and can be treated as an element of the set $\cal{D}_{r}(n-1)$.
\end{itemize}
We thus see that $\sigma\in\cal{D}_r^{(i)}(n)$ can be constructed as $(r+n,\ \sigma(r+n))\circ\rho$, where exactly one of the possibilities holds:
\begin{itemize}
    \item $\rho\in\cal{D}_{r-1}(n-1)$ and $\sigma(r+n)$ can be chosen in $r$ ways;
    \item $\rho\in\cal{D}_{r}(n-2)$ and $\sigma(r+n)$ can be chosen in $n-1$ ways;
    \item $\rho\in\cal{D}_{r}(n-1)$ and $\sigma(r+n)$ can be chosen in $r+n-1$ ways.
\end{itemize}
Finally, we conclude that the recurrence from the statement of the theorem holds.

The equality $d_r^{(i)}(n)=0$ for $n<r$ is clear. For the proof of the equality $d_r^{(i)}(r)=\frac{r!}{2}(1+(-1)^{r+i})$ it suffices to notice that the set $\cal{D}_r(r)$ contains only products of $r$ pairwise disjoint transpositions, which are permutations of parity $r\bmod{2}$.
\end{proof}



Our goal is to compute the exponential generating functions of the sequences $(d_r^{(i)}(n))_{n\in\N}$, $r\in\N$, $i\in\{0,1\}$, and to derive exact formulae for the numbers $d_r^{(i)}(n)$. At first, we consider an auxiliary sequence $(c_r(n))_{n\in\N}$ given by the formula $c_r(n)=d_r^{(1)}(n)-d_r^{(0)}(n)$, $n\in\N$. Directly from Theorem \ref{rec} we get the following.

\begin{cor}\label{recaux}
For each $r,n\in\N$ and $i\in\{0,1\}$ we have
\begin{align*}
    c_r(n)=&-rc_{r-1}(n-1)-(n-1)c_r(n-2)-(n+r-1)c_r(n-1),
\end{align*}
where $c_r(n)=0$ for $n<r$ and $c_r(r)=(-1)^{r+1}r!$.
\end{cor}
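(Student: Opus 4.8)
The statement to prove is Corollary~\ref{recaux}, which asserts that the sequence $C_r(n)=D_r^{(1)}(n)-D_r^{(0)}(n)$ satisfies the recurrence
$$C_r(n)=-rC_{r-1}(n-1)-(n-1)C_r(n-2)-(n+r-1)C_r(n-1)$$
with the stated boundary values. The plan is to derive this directly from Theorem~\ref{rec}.

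First I would write out the recurrence from Theorem~\ref{rec} twice, once for $i=0$ and once for $i=1$:
\begin{align*}
D_r^{(0)}(n)&=rD_{r-1}^{(1)}(n-1)+(n-1)D_r^{(1)}(n-2)+(n+r-1)D_r^{(1)}(n-1),\\
D_r^{(1)}(n)&=rD_{r-1}^{(0)}(n-1)+(n-1)D_r^{(0)}(n-2)+(n+r-1)D_r^{(0)}(n-1).
\end{align*}
Then I would subtract the first line from the second. On the left we get $C_r(n)=D_r^{(1)}(n)-D_r^{(0)}(n)$. On the right, each term pairs an $i=0$ quantity against an $i=1$ quantity in the \emph{opposite order}, so the differences that appear are $r(D_{r-1}^{(0)}(n-1)-D_{r-1}^{(1)}(n-1))=-rC_{r-1}(n-1)$, then $(n-1)(D_r^{(0)}(n-2)-D_r^{(1)}(n-2))=-(n-1)C_r(n-2)$, and finally $(n+r-1)(D_r^{(0)}(n-1)-D_r^{(1)}(n-1))=-(n+r-1)C_r(n-1)$. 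Summing these gives exactly the claimed recurrence.

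For the boundary conditions: $C_r(n)=D_r^{(1)}(n)-D_r^{(0)}(n)=0-0=0$ for $n<r$, directly from the vanishing in Theorem~\ref{rec}. For $n=r$, we have $D_r^{(i)}(r)=\frac{r!}{2}(1+(-1)^{r+i})$, so
$$C_r(r)=\frac{r!}{2}\big((1+(-1)^{r+1})-(1+(-1)^{r})\big)=\frac{r!}{2}\big((-1)^{r+1}-(-1)^{r}\big)=\frac{r!}{2}\cdot 2(-1)^{r+1}=(-1)^{r+1}r!,$$
as required.

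There is essentially no obstacle here: the corollary is a purely formal linear-combination consequence of Theorem~\ref{rec}, and the only point demanding a moment's care is tracking the parity flip $i\leftrightarrow 1-i$ on the right-hand side of the recurrence, which is precisely what turns every $+$ sign into a $-$ sign when passing from $D_r^{(i)}$ to $C_r$. The boundary computation is an immediate evaluation of $\frac{r!}{2}(1+(-1)^{r+i})$ at $i=0,1$.
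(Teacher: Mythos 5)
Your proposal is correct and is exactly the computation the paper intends: the paper gives no explicit proof, stating only that the corollary follows ``directly from Theorem \ref{rec},'' and your subtraction of the $i=0$ and $i=1$ instances (with the parity flip producing the sign changes) together with the boundary evaluation is precisely that direct derivation, just written out in full.
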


Now, we give a formula for the exponential generating function for the sequence $(c_r(n))_{n\in\N}$.

\begin{lem}
Let $r\in\N$. Then, the exponential generating function of the sequence $(c_r(n))_{n\in\N}$ is
\begin{align*}
    G_r(x)=\sum_{n=0}^{+\infty}\frac{c_r(n)}{n!}x^n=\frac{(-1)^{r-1}x^re^{-x}}{(1+x)^{r-1}}.
\end{align*}
\end{lem}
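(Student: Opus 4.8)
The plan is to turn the recurrence for $(C_r(n))_{n\in\N}$ from Corollary \ref{recaux} into a differential-difference relation among the generating functions $G_r(x)$, and then solve it. First I would fix $r\geq 1$ and multiply the recurrence
$$
C_r(n)=-rC_{r-1}(n-1)-(n-1)C_r(n-2)-(n+r-1)C_r(n-1)
$$
by $x^n/n!$ and sum over $n\in\N$ (the boundary data $C_r(n)=0$ for $n<r$ and $C_r(r)=(-1)^{r+1}r!$ guarantee that all the series converge formally and that no spurious low-order terms appear). Using the standard dictionary for exponential generating functions --- $\sum_n C_r(n-1)x^n/n!$ corresponds to integrating $G_r$, $\sum_n nC_r(n-1)x^n/n!$ to $xG_r(x)$, $\sum_n (n-1)C_r(n-2)x^n/n!$ to $x\int_0^x G_r$, and so on --- I expect to arrive at a first-order linear ODE for $G_r$ whose inhomogeneous term involves $G_{r-1}$. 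To avoid integral terms it is cleaner to differentiate once more: writing $H_r(x):=\sum_{n}C_r(n)x^n/n!$ one gets, after collecting, a relation of the shape $(1+x)G_r'(x)=(\text{linear in }x)\,G_r(x)-r\,x\,G_{r-1}(x)$ or similar, which is exactly solvable by an integrating factor once $G_{r-1}$ is known.

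Next I would run the induction on $r$. The base case $r=0$: from $C_0(r)=C_0(0)=(-1)^{1}0!=-1$ and the recurrence $C_0(n)=-(n-1)C_0(n-2)-(n-1)C_0(n-1)$ for $n\geq 1$ one recognizes the signed derangement recurrence, so $C_0(n)=D^{(1)}(n)-D^{(0)}(n)=-(-1)^n\binom{?}{}\ldots$; more directly, the claimed formula gives $G_0(x)=\frac{(-1)^{-1}x^0e^{-x}}{(1+x)^{-1}}=-(1+x)e^{-x}$, and one checks $-(1+x)e^{-x}=\sum_n \frac{C_0(n)}{n!}x^n$ either by expanding or by verifying it satisfies the $r=0$ ODE with the correct initial value $G_0(0)=-1$. (It is worth double-checking the small cases $r=0,1,2$ against Theorem \ref{rec}, since the formula for $D_r^{(i)}(r)=\frac{r!}{2}(1+(-1)^{r+i})$ forces $C_r(r)=(-1)^{r+1}r!$, matching the leading coefficient of $\frac{(-1)^{r-1}x^re^{-x}}{(1+x)^{r-1}}$, namely $(-1)^{r-1}=(-1)^{r+1}$.) For the inductive step I would substitute the ansatz $G_r(x)=\frac{(-1)^{r-1}x^re^{-x}}{(1+x)^{r-1}}$ into the ODE derived in the first paragraph, using the inductive hypothesis for $G_{r-1}(x)=\frac{(-1)^{r-2}x^{r-1}e^{-x}}{(1+x)^{r-2}}$, and verify the identity by a direct computation of $G_r'$ (a product/quotient-rule differentiation of $x^r(1+x)^{-(r-1)}e^{-x}$), together with a check that the initial condition at $x=0$, i.e. the lowest-order term $x^r/r!$ times $(-1)^{r+1}r!$, is reproduced.

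The main obstacle I anticipate is purely bookkeeping: getting the generating-function translation of the term $-(n+r-1)C_r(n-1)$ exactly right (it splits as $-nC_r(n-1)-(r-1)C_r(n-1)$, the first piece giving $-xG_r$ and the second an integral of $G_r$), and similarly handling $-(n-1)C_r(n-2)$, so that after differentiation all integral terms cancel and one is left with a genuine first-order ODE rather than an integro-differential equation. Once the ODE is pinned down correctly, both the base case and the inductive verification are mechanical. An alternative that sidesteps the ODE entirely, which I would keep in reserve, is to guess the bivariate generating function $\sum_{r,n}\frac{C_r(n)}{n!}x^ny^r$ from the shape of the answer --- the factor $\big(\frac{y}{1+x}\big)^{r-1}$ suggests it is essentially $\frac{-(1+x)e^{-x}}{1-\frac{xy}{1+x}}$ up to the $r=0$ discrepancy --- and then verify that this closed form satisfies the two-variable recurrence coming from Corollary \ref{recaux}; extracting the coefficient of $y^r$ then gives $G_r(x)$ directly. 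Either route reduces the whole statement to one clean verification, and I would present whichever of the two produces the shorter computation.
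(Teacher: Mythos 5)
Your proposal is correct and follows essentially the same route as the paper: differentiate $G_r$, feed in the recurrence from Corollary \ref{recaux} to obtain the first-order linear ODE $(1+x)G_r'(x)=-rG_{r-1}(x)-(x+r)G_r(x)$, and then verify the closed form by induction on $r$ with base case $G_0(x)=-(1+x)e^{-x}$. The only cosmetic difference is that the paper handles the base case via the known formula $C_0(n)=(-1)^n(n-1)$ rather than via the $r=0$ ODE, and it differentiates from the outset so the integral-term bookkeeping you worry about never arises.
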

\begin{proof}
    We proceed by induction on $r\in\N$. 
    
    For $r=0$ we know that $d_0^{(i)}(n)=\frac{1}{2}[d_0(n)-(-1)^{n+i}(n-1)]$ (see \cite[Corollary 1]{M}). Thus, $c_0(n)=(-1)^n(n-1)$, so $G_0(x)=-(1+x)e^{-x}$.
    
    For $r\geq 1$ use the recurrence from Corollary \ref{recaux} to get the following:
    \begin{align*}
        G_r'(x) &= \sum_{n=1}^{+\infty}\frac{c_r(n)}{(n-1)!}x^{n-1}=-\sum_{n=1}^{+\infty}\frac{rc_{r-1}(n-1)+(n-1)c_r(n-2)+(n+r-1)c_r(n-1)}{(n-1)!}x^{n-1}\\
        &= -rG_{r-1}(x)-xG_r(x)-xG_r'(x)-rG_{r}(x).
    \end{align*}
    Hence, we obtain a differential equation
    $$(1+x)G_r'(x)=-rG_{r-1}(x)-(x+r)G_r(x).$$
    With the use of the induction assumption 
    $$G_{r-1}(x)=\frac{(-1)^{r-2}x^{r-1}e^{-x}}{(1+x)^{r-2}}$$ 
    we easily check that 
    $$G_r(x)=\frac{(-1)^{r-1}x^re^{-x}}{(1+x)^{r-1}}$$ 
    is the solution of the above equation satisfying the boundary condition $G_r(0)=0$.
\end{proof}

With the knowledge of the exponential generating function of the sequence $(c_r(n))_{n\in\N}$ we are able to derive an explicit formula for the number $c_r(n)$.

\begin{cor}\label{crexact}
For each $r\geq 2$ and $n\geq r$ we have
    \begin{align*}
    c_{r}(n) = (-1)^{n-1} \sum_{j=r}^{n}\frac{n!}{(n-j)!}\binom{j-2}{r-2}.
    \end{align*}
\end{cor}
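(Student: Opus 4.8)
The plan is to read off $C_r(n)$ directly from the exponential generating function established in the previous lemma and then expand. Since $G_r(x)=\sum_{n\ge 0}\frac{C_r(n)}{n!}x^n=\frac{(-1)^{r-1}x^re^{-x}}{(1+x)^{r-1}}$, we have $C_r(n)=n!\,[x^n]G_r(x)=(-1)^{r-1}n!\,[x^{n-r}]\frac{e^{-x}}{(1+x)^{r-1}}$, so the whole task reduces to extracting the coefficient of $x^{n-r}$ from the product $e^{-x}\cdot(1+x)^{-(r-1)}$.

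First I would expand each factor as a power series: $e^{-x}=\sum_{m\ge 0}\frac{(-1)^m}{m!}x^m$ and, by the generalized binomial theorem, $(1+x)^{-(r-1)}=\sum_{k\ge 0}(-1)^k\binom{k+r-2}{k}x^k$ (for $r=1$ this is just the constant $1$; the case $r=0$, where the factor equals $1+x$, is cleaner to treat separately, or one simply checks $r\in\{0,1\}$ against $G_0(x)=-(1+x)e^{-x}$ and $G_1(x)=xe^{-x}$ directly). The Cauchy product then gives
\[
[x^{n-r}]\frac{e^{-x}}{(1+x)^{r-1}}=\sum_{k=0}^{n-r}(-1)^k\binom{k+r-2}{k}\cdot\frac{(-1)^{n-r-k}}{(n-r-k)!}=(-1)^{n-r}\sum_{k=0}^{n-r}\frac{1}{(n-r-k)!}\binom{k+r-2}{k},
\]
the key point being that the signs $(-1)^k$ and $(-1)^{n-r-k}$ combine into the $k$-independent constant $(-1)^{n-r}$.

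The remaining step is a change of summation index: putting $j=k+r$ turns $0\le k\le n-r$ into $r\le j\le n$, makes $n-r-k=n-j$, and, using $\binom{k+r-2}{k}=\binom{j-2}{j-r}=\binom{j-2}{r-2}$ (the last equality by the symmetry of binomial coefficients, valid since $j\ge r\ge 2$ when $r\ge 2$), rewrites the inner sum as $\sum_{j=r}^{n}\frac{1}{(n-j)!}\binom{j-2}{r-2}$. Multiplying back by $(-1)^{r-1}n!$ and using $(-1)^{r-1}(-1)^{n-r}=(-1)^{n-1}$ yields exactly $C_r(n)=(-1)^{n-1}\sum_{j=r}^{n}\frac{n!}{(n-j)!}\binom{j-2}{r-2}$.

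I do not anticipate a genuine obstacle: this is a routine coefficient extraction. The only points demanding care are the sign bookkeeping (so that the $k$-dependence in the exponents of $-1$ cancels cleanly) and the meaning of $\binom{j-2}{r-2}$ for $r\in\{0,1\}$, where $r-2<0$; I would dispose of those two small cases by hand from the explicit forms of $G_0$ and $G_1$ rather than relying on a binomial-coefficient convention.
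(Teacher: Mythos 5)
Your proposal is correct and follows essentially the same route as the paper's proof: both extract the coefficient of $x^n$ from $G_r(x)=\frac{(-1)^{r-1}x^re^{-x}}{(1+x)^{r-1}}$ by multiplying the series for $e^{-x}$ and $(1+x)^{-(r-1)}$, observing that the signs collapse to a $k$-independent factor, and reindexing with $j=k+r$. Your explicit separate treatment of $r\in\{0,1\}$ (where $\binom{j-2}{r-2}$ has a negative lower index) is a small extra care the paper leaves implicit, but it does not change the argument.
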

\begin{proof}
    We write the exponential generating function of the sequence $(c_r(n))_{n\in\N}$ as follows:
    \begin{align*}
        G_r(x) &=(-1)^{r-1}x^re^{-x}\cdot\frac{1}{(1+x)^{r-1}}=\sum_{j=0}^{+\infty}\frac{(-1)^{j+r-1}}{j!}x^{j+r}\cdot\sum_{k=0}^{+\infty}\binom{k+r-2}{r-2}(-x)^k\\
        &=\sum_{n=0}^{+\infty}\frac{x^{n+r}}{n!}\cdot (-1)^{n+r-1}\sum_{k=0}^n\binom{n}{k}\frac{(k+r-2)!}{(r-2)!}\\
        &=\sum_{n=0}^{+\infty}\frac{x^{n+r}}{(n+r)!}\cdot (-1)^{n+r-1}\sum_{k=0}^n\frac{(n+r)!}{k!(n-k)!}\cdot\frac{(k+r-2)!}{(r-2)!}\\
        &=\sum_{n=r}^{+\infty}\frac{x^{n}}{n!}\cdot (-1)^{n-1}\sum_{k=0}^{n-r}\frac{n!}{k!(n-r-k)!}\cdot\frac{(k+r-2)!}{(r-2)!}\\
        &=\sum_{n=r}^{+\infty}\frac{x^{n}}{n!}\cdot (-1)^{n-1}\sum_{k=0}^{n-r}\frac{n!}{(n-r-k)!}\cdot\binom{k+r-2}{r-2}\\
        &=\sum_{n=r}^{+\infty}\frac{x^{n}}{n!}\cdot (-1)^{n-1}\sum_{j=r}^{n}\frac{n!}{(n-j)!}\cdot\binom{j-2}{r-2}.
    \end{align*}
    Since $G_r(x)=\sum_{n=0}^{+\infty}\frac{c_r(n)}{n!}x^n$, by comparing the coefficients we get the statement. 
\end{proof}

An immediate consequence of the above formula for $c_r(n)$ is the following.

\begin{cor}
    For every $r\geq 2$ and $n\geq r$ we have 
    \begin{align*}
        (-1)^{n}\left(d_{r}^{(0)}(n)-d_{r}^{(1)}(n)\right)>0.
    \end{align*}
\end{cor}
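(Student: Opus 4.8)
The plan is to deduce the strict inequality directly from the closed form for $C_r(n)=D_r^{(1)}(n)-D_r^{(0)}(n)$ obtained in Corollary \ref{crexact}, since $(-1)^n\big(D_r^{(0)}(n)-D_r^{(1)}(n)\big)=(-1)^n\big(-C_r(n)\big)=(-1)^{n+1}C_r(n)$. First I would substitute the formula
\begin{align*}
C_r(n)=(-1)^{n-1}\sum_{j=r}^{n}\frac{n!}{(n-j)!}\binom{j-2}{r-2},
\end{align*}
valid for $r\geq 2$ and $n\geq r$, so that
\begin{align*}
(-1)^n\big(D_r^{(0)}(n)-D_r^{(1)}(n)\big)=(-1)^{n+1}C_r(n)=(-1)^{n+1}(-1)^{n-1}\sum_{j=r}^{n}\frac{n!}{(n-j)!}\binom{j-2}{r-2}=\sum_{j=r}^{n}\frac{n!}{(n-j)!}\binom{j-2}{r-2}.
\end{align*}

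The sign bookkeeping is the only delicate point: $(-1)^{n+1}(-1)^{n-1}=(-1)^{2n}=1$, so the alternating factors cancel exactly and we are left with a sum of manifestly nonnegative terms. It then remains to check that the sum is actually positive, i.e.\ strictly greater than zero, not merely nonnegative. For this I would isolate the term $j=r$, which equals $\frac{n!}{(n-r)!}\binom{r-2}{r-2}=\frac{n!}{(n-r)!}$; since $n\geq r\geq 2$ this is a positive integer, hence the whole sum is at least $\frac{n!}{(n-r)!}>0$. All remaining terms (those with $r<j\leq n$) are products of nonnegative quantities $\frac{n!}{(n-j)!}\geq 0$ and $\binom{j-2}{r-2}\geq 0$, so they only add to the total.

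I do not expect any real obstacle here: the work has been done in Corollary \ref{crexact}, and this corollary is purely a matter of tracking the sign $(-1)^{n-1}$ through the definition of $C_r(n)$ and observing that the resulting expression is a nonempty sum of nonnegative terms with a strictly positive leading term. The one place to be careful is ensuring the hypotheses $r\geq 2$ and $n\geq r$ are exactly those under which Corollary \ref{crexact} applies (so that $\binom{j-2}{r-2}$ is well defined and the range $j=r,\dots,n$ is nonempty), which matches the hypotheses of the statement to be proved.
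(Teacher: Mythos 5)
Your proof is correct and follows essentially the same route as the paper: both substitute the closed form for $C_r(n)$ from Corollary \ref{crexact}, cancel the signs, and observe that the resulting sum of nonnegative terms is strictly positive. Your explicit isolation of the $j=r$ term to justify strict positivity is a detail the paper leaves implicit, but the argument is the same.
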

\begin{proof}
Corollary \ref{crexact} gives
\begin{align*}
   d_{r}^{(0)}(n)-d_{r}^{(1)}(n) = (-1)^{n} \sum_{j=r}^{n}\frac{n!}{(n-j)!}\binom{j-2}{r-2},
\end{align*}
and the result follows.
\end{proof}

We are now ready to give exact formulae for exponential generating functions of the sequences $(d_r^{(i)}(n))_{n\in\N}$, $r\in\N$, $i\in\{0,1\}$, and the numbers $d_r^{(i)}(n)$ themselves.

\begin{thm}
Let $r\in\N$ and $i\in\{0,1\}$. Then, the exponential generating function of the sequence $(d_r^{(i)}(n))_{n\in\N}$ is
\begin{align*}
    F_r^{(i)}(x)=\sum_{n=0}^{+\infty}\frac{d_r^{(i)}(n)}{n!}x^n=\frac{1}{2}\left(\frac{x^re^{-x}}{(1-x)^{r+1}}+(-1)^{r+i}\frac{x^re^{-x}}{(1+x)^{r-1}}\right).
\end{align*}
\end{thm}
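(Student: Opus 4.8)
The plan is to combine the explicit generating function $G_r(x)$ for the difference sequence $C_r(n)=D_r^{(1)}(n)-D_r^{(0)}(n)$, already established in the lemma above, with the known exponential generating function for the sum $D_r(n)=D_r^{(0)}(n)+D_r^{(1)}(n)$. Writing $D_r^{(0)}(n)=\tfrac12\big(D_r(n)-C_r(n)\big)$ and $D_r^{(1)}(n)=\tfrac12\big(D_r(n)+C_r(n)\big)$, the two identities can be uniformly packaged as
\begin{align*}
    D_r^{(i)}(n)=\tfrac12\Big(D_r(n)+(-1)^{i+1}C_r(n)\Big)\quad\text{for }i\in\{0,1\},
\end{align*}
so that passing to exponential generating functions gives $F_r^{(i)}(x)=\tfrac12\big(H_r(x)+(-1)^{i+1}G_r(x)\big)$, where $H_r(x)=\sum_{n\geq0}\frac{D_r(n)}{n!}x^n$.

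The first step is therefore to pin down $H_r(x)$. From \cite{WMM} the exponential generating function of the $r$-derangement numbers is $H_r(x)=\dfrac{x^r e^{-x}}{(1-x)^{r+1}}$; if one prefers a self-contained derivation, it follows from the recurrence of Theorem \ref{rec} summed over $i$ (which reproduces \cite[Theorem 2]{WMM}) by exactly the same manipulation used to obtain the differential equation for $G_r$, namely $(1-x)H_r'(x)=rH_{r-1}(x)+(x+r)H_r(x)$ with $H_0(x)=\frac{e^{-x}}{1-x}$, whose solution is checked by induction. The second step is to substitute $G_r(x)=\dfrac{(-1)^{r-1}x^r e^{-x}}{(1+x)^{r-1}}$ from the preceding lemma. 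Then
\begin{align*}
    F_r^{(i)}(x)=\frac12\left(\frac{x^r e^{-x}}{(1-x)^{r+1}}+(-1)^{i+1}(-1)^{r-1}\frac{x^r e^{-x}}{(1+x)^{r-1}}\right),
\end{align*}
and since $(-1)^{i+1}(-1)^{r-1}=(-1)^{r+i}$, this is exactly the claimed formula. A small sanity check at $r=0$ (where $G_0(x)=-(1+x)e^{-x}$ and the $(1+x)^{r-1}=(1+x)^{-1}$ convention must be read correctly as the reciprocal) and at $r=1$ confirms consistency with \cite[Corollary 1]{M}.

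There is essentially no hard analytic obstacle here: the entire content has been front-loaded into the lemma computing $G_r(x)$. The only point requiring care is bookkeeping of the sign $(-1)^{r+i}$ and the parity convention, i.e. making sure that the labelling $i=0$ for even and $i=1$ for odd is matched consistently between the definition of $C_r(n)$ (which is $D_r^{(1)}-D_r^{(0)}$, odd minus even) and the final statement; a sign slip there is the most likely source of error. A secondary subtlety is the $r\in\{0,1\}$ edge cases, where the factor $(1+x)^{r-1}$ is not a polynomial and one must interpret $x^r e^{-x}/(1+x)^{r-1}$ as a genuine power series — but this is handled by the $r=0$ base case already worked out in the lemma, so it costs nothing extra. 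I would present the proof in three short lines: recall $H_r$, recall $G_r$, add and halve.
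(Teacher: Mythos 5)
Your proposal is correct and is essentially the paper's own proof: the paper likewise writes $D_r^{(i)}(n)=\tfrac12[D_r(n)-(-1)^iC_r(n)]$ (the same identity as your $(-1)^{i+1}$ form), invokes $F_r(x)=x^re^{-x}/(1-x)^{r+1}$ from \cite[Theorem 3]{WMM}, and combines it with the lemma's formula for $G_r(x)$. Your sign bookkeeping $(-1)^{i+1}(-1)^{r-1}=(-1)^{r+i}$ checks out, so there is nothing to add.
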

\begin{proof}
    The statement follows easily from the fact that $d_r^{(i)}(n)=\frac{1}{2}[d_r(n)-(-1)^ic_r(n)]$ and from the formula for the exponential generating function $F_r(x)=\frac{x^re^{-x}}{(1-x)^{r+1}}$ of the sequence $(d_r(n))_{n\in\N}$ (see \cite[Theorem 3]{WMM}).
\end{proof}

\begin{thm}\label{ThmExForDri}
Let $r,n\in\N$ with $r\geq 2$ and $i\in\{0,1\}$. Then,
\begin{align*}
   d_r^{(i)}(n)=\frac{1}{2}\sum_{j=r}^n\frac{n!}{(n-j)!}\left((-1)^{n-j}\binom{j}{r}+(-1)^{n+i}\binom{j-2}{r-2}\right).
\end{align*}
\end{thm}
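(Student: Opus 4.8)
The plan is to combine the two pieces of information already available: the explicit formula $F_r(x) = \dfrac{x^r e^{-x}}{(1-x)^{r+1}}$ for the exponential generating function of the ordinary $r$-derangement numbers (from \cite[Theorem 3]{WMM}), and the explicit formula for $C_r(n)$ from Corollary \ref{crexact}. Since $D_r^{(i)}(n) = \tfrac12\big[D_r(n) - (-1)^i C_r(n)\big]$, once we have closed forms for both $D_r(n)$ and $C_r(n)$ as sums, the result follows by termwise addition.

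First I would extract the coefficient formula for $D_r(n)$ from $F_r(x)$ by the same expansion technique used in the proof of Corollary \ref{crexact}: write
\begin{align*}
F_r(x) = x^r e^{-x} \cdot (1-x)^{-(r+1)} = \sum_{k=0}^{\infty}\frac{(-1)^k}{k!}x^{k+r}\cdot\sum_{\ell=0}^{\infty}\binom{\ell+r}{r}x^{\ell},
\end{align*}
multiply out the two series, collect the coefficient of $x^n/n!$, and reindex the inner summation variable to run over $j$ from $r$ to $n$ (with $j = $ the "total degree" coming from the $x^r$ factor plus the contribution of the second series). This yields
\begin{align*}
D_r(n) = \sum_{j=r}^n \frac{n!}{(n-j)!}(-1)^{n-j}\binom{j}{r},
\end{align*}
exactly matching the first term inside the bracket of the claimed formula (up to the factor $\tfrac12$). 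This is a routine Cauchy-product computation, completely parallel to the one already carried out for $C_r(n)$.

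Second, Corollary \ref{crexact} already gives
\begin{align*}
C_r(n) = (-1)^{n-1}\sum_{j=r}^{n}\frac{n!}{(n-j)!}\binom{j-2}{r-2},
\end{align*}
so $-(-1)^i C_r(n) = (-1)^{n+i}\sum_{j=r}^n \frac{n!}{(n-j)!}\binom{j-2}{r-2}$, which is the second term. Substituting both into $D_r^{(i)}(n) = \tfrac12[D_r(n) - (-1)^i C_r(n)]$ and combining the two sums (they have the same range $r\le j\le n$ and the same prefactor $n!/(n-j)!$) gives precisely the stated formula. I would also remark on the boundary behaviour: for $n<r$ both binomial-sum ranges are empty, so the formula correctly returns $0$, and for $n=r$ only the $j=r$ term survives, giving $\tfrac12(1+(-1)^{r+i})\cdot r!$, consistent with Theorem \ref{rec}.

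There is essentially no serious obstacle here; the only mild care needed is bookkeeping in the reindexing of the Cauchy product for $D_r(n)$ — one must track which power of $x$ comes from the $x^r$ prefactor versus the $(1-x)^{-(r+1)}$ factor, and substitute $j$ so that the $\binom{j}{r}$ emerges cleanly (the convention $\binom{j}{r}=0$ for $j<r$ makes the lower limit automatic). One should double-check that the power-of-$(-1)$ signs line up: the $e^{-x}$ expansion contributes $(-1)^k$, and after writing $k = n-j$ this becomes $(-1)^{n-j}$, which is correct. Apart from that, the argument is a direct assembly of Corollary \ref{crexact}, the WMM formula for $F_r(x)$, and the identity $D_r^{(i)} = \tfrac12(D_r - (-1)^i C_r)$.
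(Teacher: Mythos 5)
Your proposal is correct and follows essentially the same route as the paper: the paper's proof also just combines the identity $D_r^{(i)}(n)=\tfrac12[D_r(n)-(-1)^iC_r(n)]$ with Corollary \ref{crexact} and the explicit formula for $D_r(n)$ (cited directly as \cite[Theorem 4]{WMM} rather than re-derived from the exponential generating function, which is the only, inessential, difference). Your sign bookkeeping and boundary checks are all consistent with the paper.
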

\begin{proof}
    The equality follows from Corollary \ref{crexact} and the explicit formula for $d_r(n)$ in \cite[Theorem 4]{WMM}.  
\end{proof}

A straightforward consequence of Theorem \ref{ThmExForDri} and Proposition \ref{byrderangements} is the following exact formula for the numbers $d_{r,u,m}^{(i)}(n)$ under the condition $r-u\geq 2$. In the case $r-u\in\{0,1\}$ we need to use exact formulae for numbers $d^{(i)}(n)$ of classical derangements with prescribed parity \cite[Corollary 1]{M}. Note here that if $r-u=1$, then $d_1^{(i)}(n)=d^{(i)}(n+1)$.




\begin{cor}
    For all $r,u,m,n\in\mathbb{N}$ and $i\in\{0,1\}$ we have the following:
    \begin{enumerate}
        \item if $r\geq u+2$, then
        \begin{align*}
            d_{r,u,m}^{(i)}(n)=\frac{1}{2}\binom{r}{u}\binom{n}{m}\sum_{j=r-u}^{n-m}\frac{(n-m)!}{(n-m-j)!}\left((-1)^{n-m-j}\binom{j}{r-u}+(-1)^{n-m+i}\binom{j-2}{r-u-2}\right),
        \end{align*}
        \item if $r-u\in\{0,1\}$, then
        \begin{align*}
            d_{r,u,m}^{(i)}(n)=\frac{1}{2}\binom{r}{u}\binom{n}{m}\sum_{j=0}^{n-m-1}\left(\frac{(n-m+r-u)!}{j!}(-1)^j-(-1)^{n-m+r-u+i}(n-m+r-u)\right).
        \end{align*}
    \end{enumerate}
\end{cor}

\section*{Acknowledgements}

We wish to thank the anonymous referees for their careful reading of the paper, especially for the suggestion of adding some numerical evidence and extending the motivation of our work with the position \cite{CFMZS}. We also express our gratitude to Pavlo Yatsyna for writing the code in PARI/GP that counts the cardinalities of the sets $\cal{D}(r,u,m,n)$ and $\cal{D}_{r,u,m}(n)$ and lists all their elements. Moreover, we address our thanks to Bartosz Sobolewski for his remarks that improved this edition of the article.

\newpage

\section*{Appendix A}

We present the code written by Pavlo Yatsyna that we used to compute the quantities used in our paper. The functions \verb|D(r,u,m,n)| and \verb|DD(r,u,m,n)| write down the sets $\mathcal{D}(r,u,m,n)$ and $\mathcal{D}_{r,u,m}(n)$, respectively. Their cardinalities can be computed in two ways:
\begin{itemize}
    \item \verb|d(r,u,m,n)| and \verb|dd(r,u,m,n)| simply compute the number of elements of the sets;
    \item \verb|d_closed(r,u,m,n)| and \verb|dd_closed(r,u,m,n)| use the explicit formulae from Theorem \ref{main}\eqref{EqMain2} and Corollary \ref{exactd_rum(n)}.
\end{itemize}
The function \verb|C(r,u,m,n)| writes down comparisons between the cardinalities computed in these two ways.
\vspace{0.2cm}
\begin{lstlisting}
allocatemem(512000000);  \\ Allocate memory (~512 MB); adjust as needed for large permutations

\\ count pairs (x, y) with x <= r and ss[x] <= r, exactly u such pairs
number_of_pairs_xy(ss, r, u) =
{
  my(count = sum(k = 1, r, ss[k] <= r));
  return(count == u);
}

\\ count fixed points in positions r+1..r+n, exactly m of them
number_of_ts(ss, r, n, m) =
{
  my(count = sum(k = r+1, r+n, ss[k] == k));
  return(count == m);
}

\\ Check that no cycle (in the permutation ss) contains more than one element <= r
disjoint_cycles(ss, r) =
{
  local(Cycles = permcycles(ss));   \\ Compute the cycle decomposition of permutation ss
  return(
    vecmax(
      apply(
        y -> vecsum(apply(z -> z <= r, Vec(y))) > 1,
        Cycles
      )
    ) < 1
  );
}

\\ Compute the set D(r, u, m, n): permutations of size r + n with exactly u pairs (x,y) and m fixed points (ts) in positions r+1..r+n
D(r, u, m, n) =
{
  local(D = List());
  local(N = factorial(r + n));
  for(i = 1, N,
    local(ss = numtoperm(r + n, i));
    if (number_of_pairs_xy(ss, r, u) && number_of_ts(ss, r, n, m),
      listput(D, Vec(ss))
    );
  );
  return(Vec(D));
}

\\ Cardinality of D(r,u,m,n):
d(r, u, m, n) = #D(r, u, m, n);


\\ Compute the set D_{r, u, m}(n): permutations satisfying cycle-disjointness plus u pairs among the <= r region, plus m fixed points in the tail
DD(r, u, m, n) =
{
  local(D = List());
  local(N = factorial(r + n));
  for(i = 1, N,
    local(ss = numtoperm(r + n, i));
    if (
      disjoint_cycles(ss, r) &&
      number_of_ts(ss, 0, r, u) &&        \\ pairs among 1..r (r used here)
      number_of_ts(ss, r, n, m)
    ,
      listput(D, Vec(ss))
    );
  );
  return(Vec(D));
}

\\ Cardinality of D_{r,u,m}(n):
dd(r, u, m, n) = #DD(r, u, m, n);

\\ Closed formula for d_{r,u,m}(n):
dd_closed(r, u, m, n) ={
  binomial(r, u) *
  binomial(n, m) *
  sum(
    j = r - u, n - m,
    binomial(j, r - u) *
   (n - m)! /(n - m - j)! *
    (-1)^(n - m - j)
  );}

\\ Closed formula for d(r,u,m,n):
d_closed(r,u,m,n)=r!*dd_closed(r,u,m,n)

\\ Comparison d(r, u, m, n) vs d_closed(r, u, m, n) and dd(r, u, m, n) vs dd_closed(r, u, m, n)
C(r,u,m,n)=print("d(",r,", ",u,", ",m,", ",n,") = ",d(r,u,m,n)," vs ",d_closed(r,u,m,n),", d_",r,",",u,",",m"(",n,") = ",dd(r,u,m,n)," vs ",dd_closed(r,u,m,n))
\end{lstlisting}

\end{document}